\let\oldtocsection=\tocsection
\let\oldtocsubsection=\tocsubsection
\let\oldtocsubsubsection=\tocsubsubsection
\renewcommand{\tocsection}[2]{\hspace{0em}\oldtocsection{#1}{#2}}
\renewcommand{\tocsubsection}[2]{\hspace{1em}\oldtocsubsection{#1}{#2}}
\renewcommand{\tocsubsubsection}[2]{\hspace{2em}\oldtocsubsubsection{#1}{#2}}
\newtheorem{theorem}{Theorem}[section]
\newtheorem{lemma}[theorem]{Lemma}
\newtheorem{corollary}[theorem]{Corollary}
\newtheorem{proposition}[theorem]{Proposition}
\theoremstyle{definition}
\newtheorem{example}[theorem]{Example}
\newtheorem{remark}[theorem]{Remark}
\numberwithin{equation}{section}
\newcommand{\Z}{\mathbb{Z}}
\newcommand{\A}{\mathbb{A}}
\newcommand{\F}{\mathbb{F}}
\newcommand{\eps}{\varepsilon}
\newcommand{\rk}{\operatorname{rk}}
\newcommand{\mylangle}{\langle\hspace{-.5ex}\langle}
\newcommand{\myrangle}{\rangle\hspace{-.5ex}\rangle}
\newcommand{\defeq}{\vcentcolon=}
\newcommand{\red}[1]{{#1}}%{{\color{red}{#1}}}%{#1}
\newcommand{\m}{\textswab{m}}
\renewcommand{\b}{\textswab{b}}
\newcommand{\q}{\textswab{q}}
\title[Presentation of $GW(R)$]{On the presentation of the Grothendieck-Witt group of symmetric bilinear forms over local rings}
 \author{Robert Rogers and Marco Schlichting}
 \address{Mathematics Institute,
Zeeman Building,
University of Warwick,
Coventry CV4 7AL, UK} 
\thanks{}
\email{m.schlichting@warwick.ac.uk\\ Robert.Rogers@warwick.ac.uk}
\subjclass{11E81, 11E08, 19D45}
\keywords{Symmetric bilinear form, Grothendieck-Witt group, Chain Lemma, Milnor-Witt K-theory}
\begin{document}
\bibliographystyle{alpha}

\begin{abstract}
We prove a Chain Lemma for inner product spaces over commutative local rings $R$ with residue field other than $\F_2$ and use this to show that the usual presentation of the Grothendieck-Witt group of symmetric bilinear forms over $R$ as the zero-th Milnor-Witt $K$-group holds provided the residue field of $R$ is not $\F_2$.
 \end{abstract}

\maketitle

\tableofcontents

\section{Introduction}

Extending work of Witt \cite{Witt} to include the case of characteristic $2$ fields, Milnor-Husemoller prove in \cite[Lemma IV.1.1]{MilnorHusemoller} that the Witt group $W(F)$ of inner product spaces, aka non-degenerate symmetric bilinear forms, of a field $F$ is additively generated by elements $\langle a \rangle$, with $a\in F^*$, subject to the following three relations.
\begin{enumerate}
\item
\label{item:Milnor1}
For all $a,b\in F^*$ we have $\langle a^2b \rangle =\langle b \rangle$.
\item
\label{item:Milnor2}
For all $a\in F^*$ we have $\langle a \rangle + \langle -a \rangle = 0$.
\item
\label{item:Milnor3}
For all $a,b, a+b\in F^*$ we have $\langle  a \rangle  + \langle b \rangle = \langle a+b\rangle + \langle (a+b)ab\rangle$.
\end{enumerate}
From this, one readily obtains a presentation of the Grothendieck-Witt group $GW(F)$ of $F$ with the same generators and relations (\ref{item:Milnor1}), (\ref{item:Milnor2}'), (\ref{item:Milnor3}) where:
\begin{itemize}
\item[(\ref{item:Milnor2}')]
For all $a\in F^*$ we have $\langle a \rangle + \langle -a \rangle = \langle 1 \rangle + \langle -1\rangle$.
\end{itemize}
The goal of this paper is to generalise these presentations to commutative local rings $(R,\m,F)$.
In fact, we will show in Theorem \ref{thm:presentation} and Corollary \ref{cor:MilnorPresentation} below that the same presentation holds for $GW(R)$ and for $W(R)$ as long as the residue field $F=R/\m$ of the local ring $R$ satisfies $F\neq \F_2$. 
If the residue field is $\F_2$, then there are counter-examples; see Proposition \ref{prop:tildeInotI}.
It seems that our results are new when the residue field $F$ has characteristic $2$ or when $R\neq F=\F_3$.

\begin{remark}
The abelian group with generators $\langle a\rangle$, $a\in R^*$, and relations (\ref{item:Milnor1}), (\ref{item:Milnor2}'), (\ref{item:Milnor3}) (and $R$ in place of $F$) is also known as the zero-th Milnor-Witt $K$-group $K^{MW}_0(R)$ of $R$ \cite{morel:book}, \cite{GilleEtAl}, \cite{myEuler}.
The presentation of $GW(R)$ as the zeroth Milnor-Witt $K$-group has become important in applications of $\A^1$-homotopy theory \cite{morel:book}, \cite{AsokFasel:ICM} and the homology of classical groups \cite{myEuler} where the sheaf of Milnor-Witt $K$-groups plays a paramount role.
To date, the lack of understanding of the relation between Milnor-Witt $K$-theory and Grothendieck-Witt groups when $\operatorname{char}(F)=2$ is the reason that many results are only known away from characteristic $2$.
This paper therefore is part of the effort to establish these applications also in characteristic $2$ and in mixed characteristic.
\end{remark}
\vspace{1ex}

\noindent
{\bf Statement of results}. 
To state our results, recall that an {\em inner product space} 
over a commutative ring $R$ is a finitely generated projective $R$-module $V$ equipped with a non-degenerate symmetric $R$-bilinear form $\b: V\times V \to R$; see \cite{MilnorHusemoller}.
When $R$ is local, then $V$ is free of some finite rank, say $n$.
In that case, an {\em orthogonal basis} of $V$ is a basis $v_1,...,v_n$ of $V$ such that $\b (v_i,v_j)=0$ for $i\neq j$.
Note that if the residue field of $R$ has characteristic $2$, an inner product space over $R$ need not have an orthogonal basis.
Nevertheless, we prove in Proposition \ref{prop:GWgens} (\ref{prop:GWgens:item3}) that {\em stably} every inner product space over a local commutative ring $R$ has an orthogonal basis.
Two orthogonal bases $B,C$ of $V$ are called {\em chain equivalent}, written $B\approx C$, or $B\approx_R C$ to emphasise the ring $R$,
if there is a sequence $B_0,B_1,...,B_r$ of orthogonal bases of $V$ such that $B_0=B$ and $B_r=C$, and $B_{i-1}\cap B_{i}$ has cardinality at least $n-2$ for $i=1,...,r$.
Our first result is the following.

\begin{theorem}[Chain Lemma]
\label{thm:ChainLemma}
Let $(R,\m,F)$ be a commutative local ring with residue field $F\neq \F_2$.
Let $V$ be an inner product space over $R$.
Then any two orthogonal bases of $V$ are chain equivalent.
\end{theorem}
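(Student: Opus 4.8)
The plan is to argue by induction on the rank $n$ of $V$. For $n\le 2$ there is nothing to prove, since any two bases then meet in a set of cardinality $\ge 0\ge n-2$. For the inductive step, fix two orthogonal bases $B=\{v_1,\dots,v_n\}$ and $C=\{w_1,\dots,w_n\}$ of $V$. It suffices to produce an orthogonal basis $B'$ of $V$ with $w_1\in B'$ and $B\approx B'$: for then $Rw_1$ is a non-degenerate rank-one summand (as $\b(w_1,w_1)\in R^*$), the module $w_1^\perp$ is an inner product space of rank $n-1$ over $R$, and $B'\setminus\{w_1\}$ and $C\setminus\{w_1\}$ are orthogonal bases of $w_1^\perp$, hence chain equivalent as orthogonal bases of $w_1^\perp$ by the inductive hypothesis; adjoining $w_1$ to every basis in such a chain produces a chain in $V$, so $B'\approx C$ and therefore $B\approx C$.

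To construct $B'$ I would write $w_1=\sum_i c_iv_i$. Since $\b(w_1,w_1)=\sum_i c_i^2\b(v_i,v_i)\in R^*$ and $R$ is local, some $c_i$ is a unit; after reindexing $B$ and replacing $v_1$ by $c_1v_1$ (a chain move altering a single basis vector) we may assume $c_1=1$. The idea is then to shrink the support $S=\{j:c_j\ne 0\}$ of $w_1$ by repeated chain moves inside the rank-two orthogonal summands $\langle v_1,v_j\rangle$ spanned by $v_1$ and $v_j$. If, for some $j\in S\setminus\{1\}$, the element $d_j:=\b(v_1+c_jv_j,\,v_1+c_jv_j)=\b(v_1,v_1)+c_j^2\b(v_j,v_j)$ is a unit --- which is automatic whenever $c_j\in\m$ --- then $R(v_1+c_jv_j)$ is a non-degenerate rank-one summand of $\langle v_1,v_j\rangle$; replacing $(v_1,v_j)$ by $(v_1+c_jv_j,\,v_j')$, with $v_j'$ spanning its orthogonal complement in $\langle v_1,v_j\rangle$, is a chain move after which $w_1$ has coefficient $1$ on the new first vector and support $S\setminus\{j\}$. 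Iterating, we reach $w_1=v_1\in B'$.

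The main obstacle is the \emph{bad case}, where some $d_j$ is a non-unit. This forces $c_j\in R^*$ and makes $\bar v_1+\bar c_j\bar v_j$ an isotropic vector of the plane $\langle\bar v_1,\bar v_j\rangle$ over the residue field, so that no single chain move inside that plane collapses the support. This is precisely where the hypothesis $F\ne\F_2$ is used and where residue fields of characteristic $2$ require an argument going beyond Witt's classical chain-equivalence manipulation. I would bring in a third basis vector $v_k$ (available since $n\ge 3$) and exploit both that $|F|\ge 3$ and that $w_1$ genuinely lies in the orthogonal basis $C$, so that $w_1^\perp$ again admits an orthogonal basis, namely $C\setminus\{w_1\}$: by a short sequence of chain moves through $\langle v_1,v_k\rangle$ and then $\langle v_1,v_j\rangle$, choosing the intermediate anisotropic vectors suitably, one either makes the relevant partial norm a unit or transfers the obstruction to a strictly smaller support. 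When $\operatorname{char}F\ne 2$, a short computation with the three pairs among $\{v_1,v_j,v_k\}$ shows that one of them already admits a support-reducing chain move --- this is essentially Witt's original argument, cf. \cite[IV.1]{MilnorHusemoller} --- so the genuinely new work, and the technical heart of the proof, is the case $\operatorname{char}F=2$, together with the small residue fields $\F_3$ and $\F_4$.

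A clean way to isolate this new content is to reduce the whole statement to the residue field $F$. A chain move over $F$ lifts to a chain move over $R$: a rank-two orthogonal summand of $V$ reduces onto one of $\bar V=V/\m V$, an anisotropic vector in the latter lifts to an anisotropic vector upstairs, splitting it off produces the required lifted orthogonal basis of the summand, and adjoining the unchanged basis vectors gives a chain move $B\to B'$ over $R$ whose reduction modulo $\m$ is the prescribed chain move over $F$. Lifting a whole chain from $\bar B$ to $\bar C$ in this way reduces the Chain Lemma over $R$ to (i) the Chain Lemma over the field $F\ne\F_2$ and (ii) the assertion that two orthogonal bases of $V$ with the same reduction modulo $\m$ are chain equivalent over $R$; and (ii) follows from the inductive scheme above using only good-case moves, since when $\bar v_i=\bar w_i$ all coefficients $c_j$ with $j\ne 1$ lie in $\m$. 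Thus the entire difficulty is concentrated in the field Chain Lemma in characteristic $2$, which I expect to be the hardest step.
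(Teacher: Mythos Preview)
Your reduction to the residue field is correct and matches the paper (Lemmas \ref{ChainLemStrat:item1}--\ref{lem:ChainReductionRtoF}): chain moves over $F$ lift to $R$, and two orthogonal bases agreeing modulo $\m$ are chain equivalent over $R$ via good-case moves only. The gap is that you do not prove the Chain Lemma over a characteristic-$2$ field, and your support-shrinking sketch does not close it. The bad case can block every pair simultaneously: over a perfect field rescale so $\q(v_i)=1$ for all $i$; if $w_1=v_{1}+\cdots+v_{r}$ with $r$ odd then $\q(v_i+v_j)=0$ for \emph{every} $i\neq j$, and routing first through an outside $v_k$ inserts $k$ into the support of $w_1$ in the new basis rather than removing anything. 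Your phrase ``transfers the obstruction to a strictly smaller support'' is precisely the missing argument.

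The paper's characteristic-$2$ proof is structurally different from support reduction. For $F$ infinite (Theorem \ref{thm:FieldChainLem}) one does not aim at $w_1$ at all; instead one seeks a single vector $x$ whose partial sums in the coordinates of \emph{both} given bases $u$ and $v$ are all anisotropic. Such an $x$ exists by a counting argument over the $2(n-1)$ diagonal quadratic forms involved (Lemma \ref{lem:q14sols}, requiring $|F|\ge 2(n-1)$), and then Lemma \ref{lem:ElementChains} yields $u\approx u'$, $v\approx v'$ with $u'_1=v'_1=x$, so induction on $x^\perp$ finishes. For finite $F\neq\F_2$ this handles only $n\le 3$; the general finite case (Theorem \ref{thm:FiniteFieldChainLem}) does minimise support over all bases chain equivalent to $u$, but when the minimum $r$ exceeds $2$ one is forced into exactly the configuration above, $w_1=u_1+\cdots+u_r$ with $r$ odd and $\q(u_i)=1$, and the key step is the explicit chain $e\approx\hat{e}$ in $\langle 1\rangle^{\oplus(r+1)}$ (Lemma \ref{lem:eChaineHat}), which is where $F\neq\F_2$ is genuinely used. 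None of this machinery is visible in your outline.
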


Of course, this is vacuous if $V$ has no orthogonal basis. 
Theorem \ref{thm:ChainLemma} was previously known when $R$ is a field of characteristic not $2$ \cite[Satz 7]{Witt}, \cite[Theorem I.5.2]{Lam:book}, and the local case easily reduces to the field case; see Lemma \ref{lem:ChainReductionRtoF}.
The Theorem does not hold when $F=\F_2$; see Remark \ref{rmk:NoF2Chain} and Lemma \ref{lem:ChainReductionRtoF}.
The proof of Theorem \ref{thm:ChainLemma} is given in Section \ref{sec:ChainLemma}.
\vspace{1ex}

We let $GW(R)$ be the Grothendieck-Witt ring of non-degenerate symmetric bilinear forms over $R$, 
that is, the Grothendieck group associated with the abelian monoid of isomorphism classes of inner product spaces over $R$ with orthogonal sum as monoid operation \cite{Knebusch}, \cite{Sah}, \cite{MilnorHusemoller}, \cite{myHermKex}.
The ring structure is induced by the tensor product of inner product spaces.
For $a\in R^*$, we denote by $\langle a \rangle_{\Z}$ the $\Z$-basis element of the group ring $\Z[R^*]$ corresponding to $a\in R^*$, and by $\langle a \rangle$ the rank $1$ inner product space $\b( x,y) = axy$, $x,y\in V=R$.
We have elements $\mylangle a\myrangle_{\Z}=1-\langle a \rangle_{\Z}$ and $h_{\Z}=\langle 1 \rangle_{\Z} + \langle -1 \rangle_{\Z}$ in $\Z[R^*]$ and $\mylangle a\myrangle=1-\langle a \rangle$ and $h=\langle 1 \rangle + \langle -1 \rangle$ in $GW(R)$.
We may write $\langle a \rangle$, $\mylangle a \myrangle$ and $h$ in place of $\langle a \rangle_{\Z}$, $\mylangle a \myrangle_{\Z}$ and $h_{\Z}$ if their containment in $\Z[R^*]$ is understood.
Note that we have a ring homomorphism
\begin{equation}
\label{eqn:ZRGWringhomo}
\pi: \Z[R^*] \longrightarrow GW(R): \langle a \rangle_{\Z} \mapsto \langle a \rangle\end{equation}
which sends $\mylangle a \myrangle_{\Z}$ and $h_{\Z}$ to $\mylangle a \myrangle$ and $h$.
Our main result is the following which asserts that this ring homomorphism is surjective with kernel the ideal generated by three types of relations. 

\begin{theorem}[Presentation of $GW(R)$]
\label{thm:presentation}
Let $(R,\m,F)$ be a commutative local ring with residue field $F\neq \F_2$. 
Then the Grothendieck-Witt ring $GW(R)$ of inner product spaces over $R$ is the quotient ring of the integral group ring $\Z[R^*]$ of the group $R^*$ of units of $R$ modulo the following relations:
\begin{enumerate}
\item
\label{thm:presentation:SquareTriviality}
For all $a\in R^*$ we have $\mylangle a^2 \myrangle =0$.
\item
\label{thm:presentation:ScalarInv}
For all $a\in R^*$ we have $\mylangle a \myrangle  \cdot h = 0$.
\item
\label{thm:presentation:SteinbergRln}
(Steinberg relation)
For all $a,1-a\in R^*$ we have $\mylangle  a \myrangle  \cdot \mylangle  1-a \myrangle =0$.
\end{enumerate}
\end{theorem}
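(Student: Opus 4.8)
The plan is to reduce the presentation statement to the Chain Lemma (Theorem \ref{thm:ChainLemma}) together with the analysis of generators, most of which is packaged in Proposition \ref{prop:GWgens}. First I would observe that the map $\pi$ in \eqref{eqn:ZRGWringhomo} is a ring homomorphism and that it is surjective: every inner product space over $R$ is, by Proposition \ref{prop:GWgens}\,(\ref{prop:GWgens:item3}), stably diagonalisable, so its class in $GW(R)$ lies in the image of $\pi$. Next I would check that the three relations of the Theorem hold in $GW(R)$, i.e. that the proposed ideal $I\subseteq \Z[R^*]$ generated by the $\mylangle a^2\myrangle$, the $\mylangle a\myrangle h$, and the $\mylangle a\myrangle\mylangle 1-a\myrangle$ is contained in $\ker\pi$. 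For (1) this is the isomorphism $\langle a^2\rangle\cong\langle 1\rangle$; for (2) it is $\langle a\rangle h\cong h$, which follows from $\langle a,-a\rangle\cong\langle 1,-1\rangle$ (a hyperbolic plane is independent of its basis); for (3) it is the standard identity $\langle a,1-a\rangle\cong\langle 1, a(1-a)\rangle$ together with relation (1), exactly as over a field. So $\pi$ factors through a surjective ring homomorphism $\bar\pi\colon \Z[R^*]/I\twoheadrightarrow GW(R)$, and the whole content is that $\bar\pi$ is injective.

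To prove injectivity I would construct an inverse, i.e. a well-defined map $GW(R)\to \Z[R^*]/I$ on the monoid of isomorphism classes of inner product spaces that is additive for orthogonal sum. Given an inner product space $V$, by Proposition \ref{prop:GWgens}\,(\ref{prop:GWgens:item3}) there is an $m$ and an orthogonal basis of $V\perp m\cdot h$ (or $V\perp \langle 1,\dots\rangle$ — whatever stabilisation the proposition provides); writing that basis as $\langle a_1,\dots,a_N\rangle$ I would send $[V]$ to $\sum_i \langle a_i\rangle - (\text{the class of the added stabilising piece})$ in $\Z[R^*]/I$. The point where the Chain Lemma enters is well-definedness: I must show the resulting element of $\Z[R^*]/I$ is independent of the chosen orthogonal basis and of the chosen stabilisation. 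Independence of the stabilisation is handled by relations (1) and (2) (adding another $\langle 1\rangle$ or another $h$ changes nothing modulo $I$, using $\langle 1\rangle=1$ in $GW$ and $\langle a\rangle h=h$), and independence of the orthogonal basis is precisely where Theorem \ref{thm:ChainLemma} is used: any two orthogonal bases of the same space are chain equivalent, so it suffices to see that a single chain step — which alters at most two basis vectors, i.e. replaces an orthogonal summand $\langle a,b\rangle$ by an isometric orthogonal summand $\langle c,d\rangle$ — does not change the element of $\Z[R^*]/I$. Thus the crux is the two-dimensional statement: if $\langle a,b\rangle\cong\langle c,d\rangle$ over $R$, then $\langle a\rangle+\langle b\rangle\equiv\langle c\rangle+\langle d\rangle \pmod I$.

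The main obstacle, then, is this binary isometry step. Over a field this is classical: $\langle a,b\rangle\cong\langle c,d\rangle$ forces $ab$ and $cd$ to agree up to squares (equal discriminants) and the form to represent both $a$ and $c$, and one then uses relation (3) (the Steinberg/chain relation $\langle a\rangle+\langle b\rangle=\langle a+b\rangle+\langle(a+b)ab\rangle$, which modulo $I$ is equivalent to the asserted Steinberg relation $\mylangle a\myrangle\mylangle 1-a\myrangle=0$) to rewrite one representation into the other while staying in the same class mod $I$. Over a local ring I would reduce to the field case by working modulo $\m$ and lifting: an isometry of $\langle a,b\rangle$ and $\langle c,d\rangle$ over $R$ reduces to one over $F$, and since $F\neq\F_2$ the field-level chain lemma / field-level argument applies; I then lift the chain of elementary moves back to $R$, using that units of $R$ are exactly lifts of units of $F$ and that the relevant "chain" identities are polynomial. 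This reduction step — organising the lift so that each field-level relation of type (1)–(3) is witnessed by an honest relation over $R$ in the right cosets of $R^*$, and handling the extra care needed because $R^*\to F^*$ has a kernel $1+\m$ on which one must check the presentation is insensitive — is where most of the technical work lies, and where the hypothesis $F\neq\F_2$ is indispensable (for $F=\F_2$ the binary forms and the chain relation degenerate, consistent with Proposition \ref{prop:tildeInotI}). Once the binary step is established, additivity of the inverse map is immediate, $\bar\pi$ and the inverse are mutually inverse, and the presentation follows; the relation (3) as literally stated is recovered from the chain relation together with (1) and (2) by the same elementary manipulation as in the field case.
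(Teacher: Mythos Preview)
Your overall architecture matches the paper's: surjectivity of $\pi$ via Proposition~\ref{prop:GWgens}, the containment $I\subset\ker\pi$ by explicit isometries, and then the Chain Lemma (Theorem~\ref{thm:ChainLemma}) reducing $\ker\pi\subset I$ to the binary statement that $\langle a,b\rangle\cong\langle c,d\rangle$ over $R$ implies $\langle a\rangle+\langle b\rangle\equiv\langle c\rangle+\langle d\rangle$ modulo $I$. This is exactly the content of Lemma~\ref{lem:kerpiGens} and Lemma~\ref{lem:GWtoKMWrlns2}, and you have correctly isolated the binary step as the crux.

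The genuine gap is in your treatment of that binary step. You propose to reduce modulo $\m$, invoke the field case, and ``lift the chain of elementary moves back to $R$''. But a lifted chain of Witt relations starting from $\langle a\rangle+\langle b\rangle$ will terminate at $\langle c'\rangle+\langle d'\rangle$ for \emph{some} lifts $c',d'$ of $\bar c,\bar d$, not at the given $c,d$; closing the gap then requires $\langle c\rangle+\langle d\rangle=\langle c'\rangle+\langle d'\rangle$ in $\Z[R^*]/I$ whenever $c\equiv c'$, $d\equiv d'$ modulo $\m$ and $\langle c,d\rangle\cong\langle c',d'\rangle$. This is itself an instance of the binary problem, and it is not vacuous: for $m\in\m$ the element $\langle 1+m\rangle$ need not equal $\langle 1\rangle$ in $K_0^{MW}(R)$, so the presentation is \emph{not} insensitive to the kernel $1+\m$ of $R^*\to F^*$ in the way you suggest. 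The paper avoids this circularity by arguing directly over $R$: writing $c=ax^2+by^2$, it handles the ``regularly represented'' case $x,y\in R^*$ by a one-line Steinberg computation, and when, say, $y\in\m$, it uses the identity of Lemma~\ref{lem:freps} to pass through an auxiliary form $\langle\gamma,\delta\rangle$ with $\gamma=ax^2+bz^2$ for a well-chosen $z\in R^*$, in which $c$ becomes regularly represented; a short extra argument using relation~(\ref{thm:presentation:ScalarInv}) handles the residual case $F=\F_3$ where no such $z$ exists. Your plan would need either this direct argument or a careful two-stage reduction (first treating binary isometries congruent to the identity mod $\m$, then the field case), neither of which you supply.
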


In the context of Witt and Grothendieck-Witt groups, the Steinberg relation is also called Witt relation.

\begin{remark}
\label{rmk:IrrelvRlns}
If the residue field $F$ of $R$ satisfies $F\neq \F_2,\F_3$ and we impose only the Steinberg relation (\ref{thm:presentation:SteinbergRln}) in Theorem \ref{thm:presentation}, 
then imposing relation
(\ref{thm:presentation:SquareTriviality}) is equivalent to imposing relation (\ref{thm:presentation:ScalarInv}); see Lemma \ref{lem:GWtoKMWrlns} (\ref{lem:GWtoKMWrlns:item2}) below.
In particular, if the residue field is not $\F_2,\F_3$, then $GW(R)$ is the ring quotient of the group ring $\Z[R^*/(R^*)^2]$ of the group of unit square classes modulo the Steinberg relation (\ref{thm:presentation:SteinbergRln}).
When $R=F$ is any field, including $F=\F_2, \F_3$, we can dispense with the relation  (\ref{thm:presentation:ScalarInv}) as well and obtain the presentation of $GW(F)$ as the quotient of the group ring $\Z[R^*/(R^*)^2]$ modulo the Steinberg relations.
Indeed, if $R=\F_3$, relations (\ref{thm:presentation:SquareTriviality}) and (\ref{thm:presentation:ScalarInv}) are vacuous and 
if $R=\F_2$, all three relations (\ref{thm:presentation:SquareTriviality}),  (\ref{thm:presentation:ScalarInv}) and (\ref{thm:presentation:SteinbergRln}) are vacuous but the map $\pi:\Z=\Z[R^*]\to GW(R)$  \red{in (\ref{eqn:ZRGWringhomo})} is already an isomorphism.
\end{remark}

Theorem \ref{thm:presentation} was previously known for $R$ a field (including $\F_2$) \cite{MilnorHusemoller}, and for commutative local rings with residue field $F$ of characteristic not two as long as  $F\neq \F_3$ \cite[Theorem 2.2]{Gille:Semilocal}.
The theorem does not hold for local rings with residue field $\F_2$, in general; see Proposition \ref{prop:tildeInotI}.
The proof of Theorem \ref{thm:presentation}
is in Section \ref{sec:Presentation}, Corollary \ref{cor:KMWGW}.
\vspace{1ex}

Since the Witt ring $W(R)$ is the quotient of the Grothendieck-Witt ring $GW(R)$ modulo the ideal generated by $h=1+\langle -1\rangle$, we obtain the following from Theorem \ref{thm:presentation} generalising \red{the} presentation \red{\cite[Lemma IV.1.1]{MilnorHusemoller}} from fields to commutative local rings.

\begin{corollary}
\label{cor:MilnorPresentation}
Let $(R,\m,F)$ be a commutative local ring with residue field $F\neq \F_2$.
Then the Witt group $W(F)$ of inner product spaces of $R$ is additively generated by elements $\langle a \rangle$, with $a\in R^*$, subject the following three relations.
\begin{enumerate}
\item
For all $a,b\in R^*$ we have $\langle a^2b \rangle =\langle b \rangle$.
\item
For all $a\in R^*$ we have $\langle a \rangle + \langle -a \rangle = 0$.
\item
For all $a,b, a+b\in R^*$ we have $\langle  a \rangle  + \langle b \rangle = \langle a+b\rangle + \langle (a+b)ab\rangle$.
\end{enumerate}
\end{corollary}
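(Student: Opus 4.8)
The plan is to deduce Corollary~\ref{cor:MilnorPresentation} from Theorem~\ref{thm:presentation} by a purely formal comparison of presentations, using no further input about forms. Write $GW(R)=\Z[R^*]/I$, where $I$ is the ideal generated by the elements $\mylangle a^2\myrangle$, $\mylangle a\myrangle\cdot h$ and $\mylangle a\myrangle\cdot\mylangle 1-a\myrangle$ of Theorem~\ref{thm:presentation}, so that $W(R)=GW(R)/(h)=\Z[R^*]/(I+(h))$. Let $J\subseteq\Z[R^*]$ be the subgroup generated by the elements $\langle a^2b\rangle-\langle b\rangle$, $\langle a\rangle+\langle -a\rangle$ and $\langle a\rangle+\langle b\rangle-\langle a+b\rangle-\langle(a+b)ab\rangle$ (the last one only for $a,b,a+b\in R^*$) corresponding to the three relations in the statement. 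As an abelian group $\Z[R^*]$ is free on $\{\langle a\rangle:a\in R^*\}$ and maps onto $W(R)$; by Theorem~\ref{thm:presentation} and the identity $W(R)=GW(R)/(h)$ the kernel of this surjection is $I+(h)$, so the Corollary is equivalent to the equality $J=I+(h)$, which I would establish by two inclusions.

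\emph{The inclusion $J\subseteq I+(h)$} amounts to checking the three relations of the statement in $W(R)$. Relation~(1) holds already in $GW(R)$ by multiplicativity of $\langle-\rangle$ and relation~(\ref{thm:presentation:SquareTriviality}): $\langle a^2b\rangle=\langle a^2\rangle\langle b\rangle=\langle b\rangle$. Relation~(2) holds in $W(R)$ since there $\langle a\rangle+\langle -a\rangle=\langle a\rangle\cdot h=0$. For relation~(3), with $a,b,a+b\in R^*$ and $s=a+b$, multiplying the claimed identity by the unit $\langle s\rangle$ of $GW(R)$ (note $\langle s\rangle^2=\langle s^2\rangle=1$ by (\ref{thm:presentation:SquareTriviality})) and simplifying with (\ref{thm:presentation:SquareTriviality}) reduces it to $\mylangle sa\myrangle\cdot\mylangle sb\myrangle=0$; since $sa=s^2(a/s)$ and $sb=s^2(b/s)$ with $(a/s)+(b/s)=1$, relation~(\ref{thm:presentation:SquareTriviality}) gives $\mylangle sa\myrangle\cdot\mylangle sb\myrangle=\mylangle a/s\myrangle\cdot\mylangle b/s\myrangle$, which vanishes by the Steinberg relation~(\ref{thm:presentation:SteinbergRln}). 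Thus relation~(3), too, holds already in $GW(R)$.

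\emph{The inclusion $I+(h)\subseteq J$.} We have $h=\langle 1\rangle+\langle -1\rangle\in J$ (relation~(2) with $a=1$) and $\mylangle a^2\myrangle=\langle 1\rangle-\langle a^2\rangle\in J$ (relation~(1) with $b=1$). Relation~(3) with $b$ replaced by $1-a$ reads $\langle a\rangle+\langle 1-a\rangle-\langle 1\rangle-\langle a(1-a)\rangle\in J$, which is exactly $-\mylangle a\myrangle\cdot\mylangle 1-a\myrangle$. It remains to observe that $J$ is an ideal of $\Z[R^*]$: then $J$ contains $\mylangle a\myrangle\cdot h$ and, since it contains $h$ and the generators of $I$, it contains all of $I+(h)$. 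Multiplication by $\langle c\rangle$, $c\in R^*$, visibly sends a generator of type (1) or (2) to another one; and it sends the type-(3) generator for $(a,b)$ to the type-(3) generator for $(ca,cb)$ (whose side condition $c(a+b)=ca+cb\in R^*$ is equivalent to $a+b\in R^*$) modified by $\langle(a+b)ab\,c^3\rangle-\langle(a+b)ab\,c\rangle$, which lies in $J$ by relation~(1).

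Combining the two inclusions gives $J=I+(h)$, so the surjection $\Z[R^*]\twoheadrightarrow W(R)$ induces an isomorphism $\Z[R^*]/J\xrightarrow{\ \sim\ }W(R)$, which is the asserted presentation. The whole argument is formal once Theorem~\ref{thm:presentation} is in hand; the only step needing genuine care is the verification that $J$ is an ideal --- concretely, that rescaling a ternary relation by a unit $c$ reproduces another ternary relation \emph{modulo} the square relation~(1), the discrepancy being a power of $c$ absorbed by (1) --- together with the bookkeeping of the side condition $a+b\in R^*$, which is stable under scaling by units.
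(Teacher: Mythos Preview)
Your argument is correct and follows exactly the route the paper indicates: deduce the presentation of $W(R)$ from Theorem~\ref{thm:presentation} using $W(R)=GW(R)/(h)$. The paper states this in a single sentence before the corollary and leaves the translation between the two sets of relations implicit, whereas you have carefully carried out that translation (including the useful observation that $J$ is already an ideal).
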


\vspace{1ex}

\noindent
\red{{\it Acknowledgements}.
We would like to thank the referee for a careful reading of the manuscript.
Robert Rogers would like to thank the Institute of Mathematics at the University of Warwick for providing financial support in the form of a URSS grant while the research was carried out.}

\section{The Chain Lemma}
\label{sec:ChainLemma}

All rings in this article are assumed commutative.
For an inner product space \red{$(V,\b)$} over a ring $R$, we write $\q:V\to R$ for the associated quadratic form defined by \red{$\q(x)=\b(x,x)$} for $x\in V$.
We call an element $v\in V$ {\em anisotropic} if $\q(v)\in R^*$.
Note that for an orthogonal basis $(u_1,...,u_n)$ of $V$, every $u_i$ is anisotropic, $i=1,...,n$.
For units $a_1,...,a_n \in R^*$, we denote by $\langle a_1,...,a_n\rangle = \langle a_1 \rangle + \cdots + \langle a_n \rangle = \langle a_1 \rangle \oplus \cdots \oplus \langle a_n \rangle$ the inner product space which has \red{an} orthogonal basis $u_1,...,u_n$ with $\q(u_i)=a_i$ for $i=1,...,n$.

Our first  goal is to show in Lemma \ref{lem:ChainReductionRtoF} below that the Chain Lemma (Theorem \ref{thm:ChainLemma}) for a  local ring is equivalent to the Chain Lemma for its residue field.

\begin{lemma}
\label{ChainLemStrat:item1}
Let $(R,\m,F)$ be a local ring, $\eps \in \m$, and let $V$ be an inner product space over $R$. 
If $B_1 = (u_1,...,u_n)$ is an orthogonal basis of $V$, then so is $B_2 = (u_1+\eps u_2, u_2-\eps \q(u_2)\q(u_1)^{-1}u_1,u_3,...,u_n)$.
Moreover, we have $B_1=B_2 \mod \m$ and $B_1 \approx_R B_2$.
\end{lemma}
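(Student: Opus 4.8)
The plan is a direct verification, with no conceptual input needed beyond the hypothesis $\eps\in\m$. Set $a_i:=\q(u_i)\in R^*$ for $i=1,\dots,n$ (recall every member of an orthogonal basis is anisotropic), and abbreviate $v_1:=u_1+\eps u_2$ and $v_2:=u_2-\eps a_2a_1^{-1}u_1$, so that $B_2=(v_1,v_2,u_3,\dots,u_n)$.

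First I would check that the vectors of $B_2$ are pairwise orthogonal. Expanding $\b(v_1,v_2)$ bilinearly and using $\b(u_1,u_2)=0$ collapses it to $-\eps a_2a_1^{-1}\q(u_1)+\eps\q(u_2)=-\eps a_2+\eps a_2=0$, and $\b(v_1,u_j)=\b(v_2,u_j)=0$ for $j\geq 3$ is immediate since $u_1,u_2$ are orthogonal to $u_3,\dots,u_n$. Next I would show $B_2$ is a basis: the $2\times 2$ matrix expressing $(v_1,v_2)$ in terms of $(u_1,u_2)$ is $\left(\begin{smallmatrix}1 & -\eps a_2a_1^{-1}\\ \eps & 1\end{smallmatrix}\right)$, whose determinant $1+\eps^2a_2a_1^{-1}$ lies in $1+\m\subseteq R^*$ precisely because $\eps\in\m$; combined with the unchanged vectors $u_3,\dots,u_n$ this makes the transition matrix from $B_1$ to $B_2$ invertible, so $B_2$ is an $R$-basis of $V$. (I would also record $\q(v_1)=a_1(1+\eps^2a_2a_1^{-1})$ and $\q(v_2)=a_2(1+\eps^2a_2a_1^{-1})$, both units, so that $B_2$ is genuinely an orthogonal basis in the sense that all its members are anisotropic.)

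The two remaining assertions are then formal. Since $\eps\in\m$ we have $v_1\equiv u_1$ and $v_2\equiv u_2$ in $V/\m V$, so $B_1$ and $B_2$ have the same image there, i.e. $B_1=B_2\bmod\m$. Finally, $B_1$ and $B_2$ differ only in their first two entries and hence share the $n-2$ vectors $u_3,\dots,u_n$, so $|B_1\cap B_2|\geq n-2$; applying the definition of $\approx_R$ to the length-one chain $B_1,B_2$ gives $B_1\approx_R B_2$. I do not expect any obstacle here; the only point where the hypotheses are used is the observation that $\eps\in\m$ forces $1+\eps^2a_2a_1^{-1}$ to be a unit, which is what makes $B_2$ a basis and its vectors anisotropic.
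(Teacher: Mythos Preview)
Your proof is correct and follows the same approach as the paper's: a direct verification of orthogonality, invertibility of the transition matrix (using $\eps\in\m$), the congruence $B_1\equiv B_2\bmod\m$, and chain equivalence from the definition. The paper's version is simply terser, leaving the computations you wrote out as ``checked directly.''
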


\begin{proof}
Since $\eps \in \m$, we have  $B_1=B_2 \mod \m$, and $B_2$ is a basis since $B_1$ is.
Orthogonality is checked directly. Since $B_1$ and $B_2$ differ in only two terms, they are chain equivalent, by definition.
\end{proof}

\begin{lemma}
\label{lem:EqualModM}
 Let $(R,\m,F)$ be a local ring, and let $V$ be an inner product space over $R$. 
 If $B_1=(u_1,...,u_n)$ and $B_2=(v_1,...,v_n)$ are orthogonal bases of $V$ such that $B_1= B_2 \mod \m$, then $B_1 \approx_R B_2$.
\end{lemma}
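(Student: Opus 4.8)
The plan is to induct on the rank $n$ of $V$, using as the only elementary moves Lemma \ref{ChainLemStrat:item1} together with the observation that rescaling a single vector of an orthogonal basis by a unit produces an orthogonal basis differing from it in at most one vector, hence a chain-equivalent one. If $n \le 2$ there is nothing to prove, since then $B_1 \cap B_2$ has at least $n - 2 \le 0$ elements and so $B_1 \approx_R B_2$ by definition. So assume $n \ge 3$ and that the lemma holds in all smaller ranks.

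\emph{First}, I would reduce to the case in which $B_1$ and $B_2$ share their first vector. Expand $v_1 = c_1 u_1 + c_2 u_2 + \dots + c_n u_n$ in the basis $B_1$; since $v_1 \equiv u_1 \bmod \m V$ we have $c_1 \in R^*$ and $c_2, \dots, c_n \in \m$. Applying Lemma \ref{ChainLemStrat:item1} in turn to the index pairs $\{1,2\}, \{1,3\}, \dots, \{1,n\}$ (reordering the basis when necessary, which does not affect its chain-equivalence class, as that is defined through the underlying sets), each time with $\eps = c_j c_1^{-1} \in \m$, replaces the first basis vector by $u_1 + \sum_{j=2}^n (c_j c_1^{-1}) u_j = c_1^{-1} v_1$ while leaving each other basis vector congruent to $u_j$ modulo $\m$; a final rescaling of the first vector by $c_1$ then yields an orthogonal basis $B_1' = (v_1, w_2, \dots, w_n)$ with $B_1 \approx_R B_1'$ and $w_j \equiv v_j \bmod \m V$ for $j \ge 2$. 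The delicate point is that at the $j$-th step the vector in position $j$ is still the original $u_j$ — each position is altered only at its own step — so the first vector does accumulate precisely the terms $(c_j c_1^{-1}) u_j$; this index bookkeeping is the fiddliest part of the proof. (Note also that at each step the current first vector is anisotropic, being part of an orthogonal basis, so Lemma \ref{ChainLemStrat:item1} indeed applies.)

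\emph{Next}, since $v_1$ is anisotropic, $V = R v_1 \oplus v_1^{\perp}$ with $v_1^{\perp}$ an inner product space over $R$ of rank $n - 1$, and $(w_2, \dots, w_n)$ and $(v_2, \dots, v_n)$ are orthogonal bases of $v_1^{\perp}$ that are congruent modulo $\m$ \emph{inside} $v_1^{\perp}$: as $v_1^{\perp}$ is a direct summand of $V$ one has $\m V \cap v_1^{\perp} = \m\, v_1^{\perp}$, and $w_j - v_j$ lies in this intersection. By the inductive hypothesis applied to $v_1^{\perp}$ there is a chain of orthogonal bases of $v_1^{\perp}$ from $(w_2, \dots, w_n)$ to $(v_2, \dots, v_n)$; prepending $v_1$ to each member of this chain gives a chain of orthogonal bases of $V$ — each remains a basis because $V = R v_1 \oplus v_1^{\perp}$, each remains orthogonal because its last $n - 1$ vectors lie in $v_1^{\perp}$, and consecutive members still agree in all but at most two vectors. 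Hence $B_1' \approx_R B_2$, and combining with the first step, $B_1 \approx_R B_2$.

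Beyond Lemma \ref{ChainLemStrat:item1} the only ingredients are the two routine structural facts that $\m V \cap v_1^{\perp} = \m\, v_1^{\perp}$ and that a chain of orthogonal bases of $v_1^{\perp}$ extends, by prepending $v_1$, to one of $V$, together with the telescoping computation in the first step; I expect the bookkeeping in that first step to be the main obstacle, and I note that the argument never uses any hypothesis on the residue field.
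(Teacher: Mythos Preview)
Your proof is correct and follows essentially the same approach as the paper: induct on $n$, use repeated applications of Lemma~\ref{ChainLemStrat:item1} to replace the first vector of $B_1$ by $v_1$ while keeping the remaining vectors congruent modulo $\m$, then invoke the induction hypothesis on $v_1^{\perp}$. The only cosmetic difference is the order of operations---the paper rescales first (passing from $u_1$ to $(1+\eps_1)u_1$) and then runs the Lemma~\ref{ChainLemStrat:item1} steps, whereas you run the Lemma~\ref{ChainLemStrat:item1} steps first and rescale by $c_1$ at the end; your explicit remarks on $\m V \cap v_1^{\perp} = \m\,v_1^{\perp}$ and on lifting chains from $v_1^{\perp}$ to $V$ spell out details the paper leaves implicit.
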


\begin{proof}
The proof is by induction on $n\geq 1$.
By the definition, for $n=1$ and $n=2$ any two orthogonal bases are chain equivalent. In particular, the claim is true for $n=1,2$.
For $n> 2$,
we claim that $(u_1,\red{u_2},...,u_n) \approx_R (v_1,u'_2,...,u'_n)$ for some $u'_2,...,u'_n\in V$ such that $u'_i=u_i\mod \m$, \red{$i=2,...,n$}.
Then the induction hypothesis applied to the two orthogonal bases $(u'_2,...,u'_n)$ and $(v_2,...,v_n)$ of the non-degenerate subspace $v_1^{\perp}$ of $V$ yields $(u_1,\red{u_2},...,u_n) \approx_R (v_1,u'_2,...,u'_n) \approx_R(v_1,v_2,...,v_n)$.
To prove the claim, note that $v_1=u_1+\eps_1u_1+\eps_2u_2+\cdots +\eps_nu_n$ for some $\eps_i\in \m$ since $u_1=v_1\mod \m$.
For $i=0,...,n$, set $u_1^{(i)} = u_1+\eps_1u_1+\eps_2u_2+\cdots +\eps_iu_i$.
Then $u_1^{(0)}=u_1$ and $u_1^{(n)}=v_1$.
For $i=2,...,n$, we apply Lemma \ref{ChainLemStrat:item1} recursively to the pair $(u_1^{(i-1)},u_i)$ to find $u'_i\in V$ such that $u'_i=u_i\mod \m$ and 
$$(u_1,\red{u_2},...,u_n) \approx_R (u_1^{(1)},u_2,...,u_n) \approx_R (u_1^{(i)},u'_2,...,u'_i,u_{i+1},...,u_n)$$
where the first $\approx_R$ is the case $n=1$.
\end{proof}

\begin{lemma}
    \label{ChainLemStrat:item3pre}
Let $(R,\m,F)$ be a local ring, and let $V$ be an inner product space over $R$. 
Any orthogonal basis $\bar{u}=(\bar{u}_1,...,\bar{u}_n)$ of $V_F=V\otimes_RF$ is the image mod $\m$ of an orthogonal basis $u=(u_1,...,u_n)$ of $V$, called lift of $\bar{u}$.
If two orthogonal bases $\bar{u}$, $\bar{v}$ of $V_F$ differ by at most two places, then there are lifts $u$ and $v$ of $\bar{u}$ and $\bar{v}$ which differ in at most two places.
\end{lemma}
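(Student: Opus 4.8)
The plan is to exploit the facts that $V$ is free over the local ring $R$ and that an orthogonal basis of $V_F$ is characterised by the vanishing of the off-diagonal entries of the Gram matrix together with invertibility of the diagonal entries; the latter is an open condition that lifts. First I would lift $\bar u=(\bar u_1,\dots,\bar u_n)$ naively to any tuple $w=(w_1,\dots,w_n)$ in $V$ reducing to $\bar u$ mod $\m$; since the $\bar u_i$ form a basis of $V_F$, the $w_i$ form a basis of $V$ by Nakayama, and since $\q(\bar u_i)\in F^*$ we have $\q(w_i)\in R^*$, i.e. each $w_i$ is anisotropic. Now I would orthogonalise $w$ by the usual Gram–Schmidt procedure over $R$: replacing $w_j$ by $w_j-\sum_{i<j}\b(w_j,w_i)\q(w_i)^{-1}w_i$ produces an orthogonal basis $u=(u_1,\dots,u_n)$ of $V$. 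The key point is that, because each $\b(w_j,w_i)\equiv \b(\bar u_j,\bar u_i)=0\bmod\m$ for $i\neq j$, all the correction terms lie in $\m V$, so $u_i\equiv w_i\equiv \bar u_i\bmod\m$; thus $u$ is a lift of $\bar u$. This proves the first assertion.

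For the second assertion, suppose $\bar u$ and $\bar v$ differ in at most two places, say $\bar u_i=\bar v_i$ for all $i\notin\{1,2\}$ (after reindexing), so that $\bar u_3,\dots,\bar u_n$ is a common orthogonal basis of a non-degenerate subspace $\bar W\subseteq V_F$, and both $(\bar u_1,\bar u_2)$ and $(\bar v_1,\bar v_2)$ are orthogonal bases of its orthogonal complement $\bar W^\perp$, which is a free rank-two inner product space over $F$. The strategy is: first lift the common tail once, then lift the two rank-two frames compatibly with that tail. Concretely, apply the first part of the lemma to the orthogonal basis $(\bar u_3,\dots,\bar u_n)$ of $\bar W$ — wait, that is not quite a basis of $V_F$; instead I would lift the full basis $\bar u$ to an orthogonal basis $u=(u_1,\dots,u_n)$ of $V$ as above, set $W=\Span(u_3,\dots,u_n)$, a non-degenerate free direct summand of $V$ with $W\otimes_R F=\bar W$, and let $W^\perp$ be its orthogonal complement, a free rank-two inner product space over $R$ with $(u_1,u_2)$ an orthogonal basis reducing to $(\bar u_1,\bar u_2)$. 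Now apply the first part of the lemma to the rank-two space $W^\perp$ and the orthogonal basis $(\bar v_1,\bar v_2)$ of $W^\perp\otimes_R F=\bar W^\perp$: this yields an orthogonal basis $(v_1,v_2)$ of $W^\perp$ reducing to $(\bar v_1,\bar v_2)$. Then $v:=(v_1,v_2,u_3,\dots,u_n)$ is an orthogonal basis of $V$ lifting $\bar v$, and $u$ and $v$ differ in at most the first two places, as required.

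The main obstacle I expect is purely bookkeeping rather than conceptual: one must be careful that "differ in at most two places" is used in the strong sense that there is a single relabelling making the $n-2$ remaining vectors literally equal, so that the common tail genuinely spans a common direct summand $W$ over $F$ that can be lifted once and for all; and one must check that $W$ and $W^\perp$ are again \emph{non-degenerate} free summands of $V$ over $R$ (not merely over $F$), which follows because their reductions are non-degenerate and non-degeneracy of the Gram matrix is detected by invertibility of its determinant, an open condition. Everything else — that Gram–Schmidt corrections land in $\m V$, that anisotropy lifts, that a lifted basis is a basis by Nakayama — is routine.
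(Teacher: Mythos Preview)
Your argument is correct and essentially identical to the paper's: lift the basis vector by vector while enforcing orthogonality to the previous lifts (the paper phrases this as choosing a lift of $\bar u_i$ directly inside $\{u_1,\dots,u_{i-1}\}^{\perp}$, which is exactly your Gram--Schmidt projection), and for the second assertion lift the common tail $(u_3,\dots,u_n)$ once and then lift the two remaining vectors inside its orthogonal complement. One minor slip to fix: the Gram--Schmidt formula should use the already-orthogonalised $u_i$, not the raw $w_i$, on the right-hand side (as written, $\b(u_3,u_1)=-\b(w_3,w_2)\q(w_2)^{-1}\b(w_2,w_1)$ need not vanish), but this does not affect your reasoning, since inductively $u_i\equiv w_i\bmod\m V$ and the corrections still lie in $\m V$.
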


\begin{proof}
Choose any lift $u_1$ of $\bar{u}_1$ inside $V$, then any lift $u_2$ of $\bar{u}_2$ inside $u_1^{\perp}\subset V$, then any lift $u_3$ of $\bar{u}_3$ inside $\{u_1,u_2\}^{\perp}\subset  V$... 
This yields a lift $u$ of $\bar{u}$.
Assume $\bar{u}=(\bar{u}_1,\bar{u}_2, \bar{u}_3,...,\bar{u}_n)$ and $\bar{v}=(\bar{v}_1,\bar{v}_2, \bar{u}_3,...,\bar{u}_n)$.
Let $u=(u_1,...,u_n)$ be a lift of $\bar{u}$. 
Let $(v_1,v_2)$ be a lift of $(\bar{v}_1,\bar{v}_2)$ inside $\{u_3,...,u_n\}^{\perp}$.
Then we can choose $v=(v_1,v_2,u_3,...,u_n)$ as lift of $\bar{v}$.
\end{proof}

For two orthogonal bases $B,C$ of an inner product space $V$ over a local ring $(R,\m,F)$, we write $B\approx_F C$ if the images of $B$ and $C$ in $V_F=V \otimes_RF$ are chain equivalent over $F$.
The following shows that the Chain Lemma (Theorem \ref{thm:ChainLemma}) for a local ring is equivalent to the Chain Lemma for its residue field.

\begin{lemma}
\label{lem:ChainReductionRtoF}
Let $(R,\m,F)$ be a local ring and $V$ an inner product space over $R$.
For two orthogonal bases $B$, $C$ of $V$, if $B\approx_F C$, then $B \approx_R C$.
\end{lemma}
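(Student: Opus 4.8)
The plan is to use the hypothesis $B \approx_F C$ to produce a chain over $R$ by lifting the given chain over $F$ place by place, using Lemmas \ref{ChainLemStrat:item1}, \ref{lem:EqualModM} and \ref{ChainLemStrat:item3pre} as the workhorses. By definition of $\approx_F$, the images $\bar B$, $\bar C$ of $B$ and $C$ in $V_F$ are connected by a sequence $\bar B = \bar B_0, \bar B_1, \dots, \bar B_r = \bar C$ of orthogonal bases of $V_F$ with consecutive ones differing in at most two places. I would prove, by induction on $r$, the statement: if $B'$ is any orthogonal basis of $V$ lifting $\bar B_0$ and $C'$ any orthogonal basis of $V$ lifting $\bar B_r$, then $B' \approx_R C'$. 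Applied to $B' = B$ and $C' = C$ this gives the lemma, since $B$ lifts $\bar B = \bar B_0$ and $C$ lifts $\bar C = \bar B_r$.

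For the induction step, suppose the claim holds for chains of length $r-1$, and consider $\bar B_0, \dots, \bar B_r$. The pair $\bar B_{r-1}, \bar B_r$ differs in at most two places, so by Lemma \ref{ChainLemStrat:item3pre} there are lifts $D$ of $\bar B_{r-1}$ and $E$ of $\bar B_r$, both orthogonal bases of $V$, which differ in at most two places; hence $D \approx_R E$ directly from the definition of chain equivalence. Now $C'$ and $E$ are both orthogonal bases of $V$ lifting $\bar B_r$, so $C' \equiv E \bmod \m$, and Lemma \ref{lem:EqualModM} gives $E \approx_R C'$. By the induction hypothesis applied to the chain $\bar B_0, \dots, \bar B_{r-1}$ of length $r-1$, with the lift $B'$ of $\bar B_0$ and the lift $D$ of $\bar B_{r-1}$, we get $B' \approx_R D$. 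Concatenating, $B' \approx_R D \approx_R E \approx_R C'$, which completes the induction (the base case $r = 0$ is exactly Lemma \ref{lem:EqualModM}, since then $B'$ and $C'$ both lift $\bar B_0$ and so agree mod $\m$).

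The only subtle point is bookkeeping: one must be careful that "differ in at most two places" is preserved when passing between $V$ and $V_F$, and that Lemma \ref{ChainLemStrat:item3pre} indeed produces lifts that are simultaneously orthogonal bases of $V$ and differ in at most two places — but this is exactly what that lemma asserts, so there is no real obstacle. I expect no serious difficulty here; the lemma is essentially a formal consequence of the three preparatory lemmas, and the induction on the length of the $F$-chain is the natural and only reasonable strategy.
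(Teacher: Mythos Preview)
Your proposal is correct and follows essentially the same approach as the paper: lift each consecutive pair in the $F$-chain via Lemma~\ref{ChainLemStrat:item3pre} to get lifts differing in at most two places, then use Lemma~\ref{lem:EqualModM} to glue adjacent lifts that reduce to the same basis modulo~$\m$. The only cosmetic difference is that you phrase this as an induction on the chain length $r$, whereas the paper writes out the resulting chain $B\approx_R B_0 \approx_R C_1 \approx_R B_1 \approx_R \cdots \approx_R C_r \approx_R C$ explicitly in one line; also, Lemma~\ref{ChainLemStrat:item1} is not actually needed here (it is used only inside the proof of Lemma~\ref{lem:EqualModM}).
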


\begin{proof}
Choose a sequence $\bar{B}_i$, $i=0,...,r$ of orthogonal bases of $V_F$ such that $\bar{B}_0$ and $\bar{B}_r$ are the images of $B$ and $C$ in $V_F$ and $\bar{B}_i$ differs from $\bar{B}_{i+1}$ in at most two places, $i=0,...,r-1$.
By Lemma \ref{ChainLemStrat:item3pre}, for $i=0,...,r-1$ we can choose lifts $B_i$, $C_{i+1}$ of $\bar{B}_i$ and $\bar{B}_{i+1}$ such that $B_i$ and $C_{i+1}$ differ in at most two places.
By Lemma \ref{lem:EqualModM}, we have $B\approx_R B_0$, $B_i\approx_RC_i$ for $i=1,...,r-1$ and $C_r\approx_RC$.
Hence,
$$B\approx_R B_0 \approx_R C_1 \approx_RB_1 \approx_R C_2 \approx_RB_2 \approx_R C_3 \approx_R \cdots \approx_R C_r \approx_RC.$$ 
\end{proof}

Our next goal is to prove in Theorem \ref{thm:FieldChainLem} the Chain Lemma (Theorem \ref{thm:ChainLemma}) for infinite fields of characteristic $2$.
We will make frequent use of the following.

\begin{lemma}
\label{lem:ElementChains}
Let $n\geq 2$ be an integer, and 
let $u=(u_1,...,u_n)$ be an orthogonal basis of an inner product space $V$ of rank $n$ over a field $F$.
Let $v_1=a_1u_1+ \cdots +a_nu_n$, where $a_1,...,a_n\in F$.
If for all $2\leq r \leq n$, the partial linear combination $v_1^{(r)} = a_1u_1+\cdots +a_ru_r$ is anisotropic, then $v_1=v_1^{(n)}$ can be extended to an orthogonal basis $v=(v_1,...,v_n)$ of $V$ such that $u\approx_F v$.
\end{lemma}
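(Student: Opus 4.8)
The plan is to induct on $n$, peeling off one coordinate at a time and using that the ambient rank is reduced by working inside an orthogonal complement of dimension $n-1$. For $n=2$ the statement is essentially the definition: $v_1 = a_1u_1 + a_2u_2 = v_1^{(2)}$ is anisotropic by hypothesis, so $v_1^\perp$ is a nondegenerate line inside $V$ spanned by some $v_2$, and $(v_1,v_2)$ is an orthogonal basis with $(u_1,u_2)\approx_F(v_1,v_2)$ since any two orthogonal bases of a rank $2$ space are chain equivalent by definition. For the inductive step, suppose $n>2$ and the result holds for $n-1$. The idea is to first replace the pair $(u_{n-1},u_n)$ by a new orthogonal pair $(w_{n-1},u_n')$ inside $\Span(u_{n-1},u_n)$ so that $v_1^{(n)}$ lies in $\Span(u_1,\dots,u_{n-2},w_{n-1})$, i.e.\ so the last coordinate vanishes. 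Concretely, since $v_1^{(n-1)}$ and $v_1^{(n)}$ are anisotropic (the cases $r=n-1$ and $r=n$ of the hypothesis), the vector $w_{n-1} := v_1^{(n)} - v_1^{(n-2)} = a_{n-1}u_{n-1} + a_nu_n$ is anisotropic, so $\Span(u_{n-1},u_n) = \Span(w_{n-1}, u_n')$ for a suitable $u_n' \perp w_{n-1}$; this is a change of basis in only the last two slots, hence $(u_1,\dots,u_n)\approx_F(u_1,\dots,u_{n-2},w_{n-1},u_n')$. (One should check $w_{n-1}$ is genuinely part of a basis, which holds because $a_{n-1},a_n$ are not both zero — if they were, $w_{n-1}=0$ would not be anisotropic.)

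Now set $u'' := (u_1,\dots,u_{n-2},w_{n-1})$, an orthogonal basis of the rank-$(n-1)$ space $W := (u_n')^\perp$, and observe $v_1 = a_1u_1 + \cdots + a_{n-2}u_{n-2} + 1\cdot w_{n-1}$ expressed in this basis. I must verify the anisotropy hypothesis of the lemma for the basis $u''$ of $W$: for $2 \le r \le n-2$ the partial sum is $v_1^{(r)}$, anisotropic by assumption; and the full sum (the "$r = n-1$" case for the rank-$(n-1)$ problem) is $v_1^{(n-2)} + w_{n-1} = v_1^{(n)} = v_1$, which is anisotropic. So the inductive hypothesis applies in $W$: $v_1$ extends to an orthogonal basis $(v_1, v_2, \dots, v_{n-1})$ of $W$ with $u'' \approx_F (v_1,\dots,v_{n-1})$ as bases of $W$. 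Adjoining $u_n'$ to both sides — which preserves chain equivalence since the extra vector is held fixed and sits in $W^\perp$ — gives
$$(u_1,\dots,u_{n-2},w_{n-1},u_n') \approx_F (v_1, v_2, \dots, v_{n-1}, u_n').$$
Combining with the first step, $(u_1,\dots,u_n) \approx_F (v_1,\dots,v_{n-1},u_n')$, and relabelling $v_n := u_n'$ completes the induction.

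The step I expect to be the main (albeit modest) obstacle is the careful bookkeeping that chain equivalence inside the subspace $W$ promotes to chain equivalence in $V$: a sequence of orthogonal bases of $W$ each differing from the next in at most two vectors gives, after appending the fixed vector $u_n'$, a sequence of orthogonal bases of $V = W \perp \Span(u_n')$ with the same property, since $u_n'$ is orthogonal to all of $W$ and each intermediate basis of $W$ together with $u_n'$ is indeed a basis of $V$. This is straightforward but should be stated explicitly, perhaps as a one-line remark, since it is used repeatedly. A secondary point worth a sentence is the base case handling and the observation that "anisotropic" forces the relevant coefficient vectors to be nonzero, so the vectors produced are honest basis elements rather than zero.
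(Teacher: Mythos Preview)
Your inductive strategy has a genuine gap: the claim that $w_{n-1}=a_{n-1}u_{n-1}+a_nu_n$ is anisotropic does not follow from the hypotheses you cite. Knowing that $v_1^{(n-1)}$ and $v_1^{(n)}$ are anisotropic says nothing about $\q(w_{n-1})=\q(v_1^{(n)})-\q(v_1^{(n-2)})$; two nonzero field elements can certainly have zero difference. Concretely, over $\F_4=\F_2[\alpha]/(\alpha^2+\alpha+1)$ take $V=\langle 1,1,1,1\rangle$ with standard basis $u_1,\dots,u_4$ and coefficients $(a_1,a_2,a_3,a_4)=(1,\alpha,1,1)$. Then $\q(v_1^{(2)})=1+\alpha^2=\alpha$, $\q(v_1^{(3)})=\alpha+1$, $\q(v_1^{(4)})=\alpha$, all nonzero, so the lemma's hypotheses hold; but $w_3=u_3+u_4$ has $\q(w_3)=0$, so you cannot split $Fu_3\perp Fu_4$ into an orthogonal basis containing $w_3$, and your induction stalls.

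The paper avoids this by running the construction \emph{forward} rather than backward: it replaces $(u_1,u_2)$ by $(v_1^{(2)},v_2)$ inside $Fu_1\perp Fu_2$, then at each stage replaces $(v_1^{(r)},u_{r+1})$ by $(v_1^{(r+1)},v_{r+1})$ inside $Fv_1^{(r)}\perp Fu_{r+1}$. The point is that the anisotropic vector needed at step $r$ is exactly $v_1^{(r+1)}$, which is anisotropic \emph{by hypothesis}, so no extra condition like $\q(w_{n-1})\neq 0$ is ever required. Your argument could be repaired by reversing direction: do one forward step to pass from $(u_1,\dots,u_n)$ to $(v_1^{(2)},v_2,u_3,\dots,u_n)$, then apply the induction hypothesis to the rank-$(n-1)$ space $v_2^{\perp}$ with basis $(v_1^{(2)},u_3,\dots,u_n)$, noting that the relevant partial sums there are $v_1^{(3)},\dots,v_1^{(n)}$. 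That is essentially the paper's argument recast as an induction on $n$.
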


\begin{proof}
Choose $v_2$ to be a generator of the orthogonal of $v_1^{(2)}$ inside $Fu_1\perp Fu_2$.
Then $u \approx (v_1^{(2)},v_2,u_3,...,u_n)$.
For an integer $r$ with $2 \leq r <n$, 
assume we have constructed elements $v_2,...,v_{r}\in V$ such that $(v_1^{(r)}, v_2,...,v_{r}, u_{r+1},...,u_n)$ is an orthogonal basis of $V$ that is chain equivalent to $u$.
Note that $v_1^{(r+1)}$ is an anisotropic vector in $Fv_1^{(r)}\perp Fu_{r+1}$.
Choose $v_{r+1}$ to be a generator of the orthogonal complement $(v_1^{(r+1)})^{\perp}$ of $Fv_1^{(r+1)}$ inside $Fv_1^{(r)}\perp Fu_{r+1}$.
Then
$$u\approx (v_1^{(r)}, v_2,...,v_r, u_{r+1},...,u_n) \approx (v_1^{(r+1)}, v_2,...,v_{r+1}, u_{r+2},...,u_n).$$
By induction on $r$, we obtain the case $r=n$ which is the statement of the lemma.
\end{proof}

\begin{theorem}
\label{thm:FieldChainLem}
Let $F$ be a field of characteristic 2, and let
 $V$ be an inner product space over $F$.
If $F$ is finite, assume that $\dim_FV=3$.
Then any two orthogonal bases of $V$ are chain equivalent.
\end{theorem}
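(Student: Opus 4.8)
The plan is to argue by induction on $n=\dim_F V$, the cases $n\le 2$ being immediate from the definition of chain equivalence. For the inductive step I would reduce, exactly as in the classical argument over fields of characteristic $\neq 2$, to the following assertion: if $u=(u_1,\dots,u_n)$ is an orthogonal basis of $V$ and $w\in V$ is anisotropic and occurs as the first vector of \emph{some} orthogonal basis $v=(w,v_2,\dots,v_n)$ of $V$, then there is an orthogonal basis $u'=(w,u_2',\dots,u_n')$ with $u\approx_F u'$. Granting this, $(u_2',\dots,u_n')$ and $(v_2,\dots,v_n)$ are orthogonal bases of the rank-$(n-1)$ subspace $w^{\perp}$; when $F$ is finite we have $n-1=2$ and these are chain equivalent by definition, and when $F$ is infinite the inductive hypothesis applies, so in either case $u\approx_F u'\approx_F(w,v_2,\dots,v_n)=v$ (a chain in $w^{\perp}$ extends to a chain in $V$ by leaving $w$ fixed).

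To construct $u'$, write $w=\sum_{i=1}^n a_i u_i$ and, after reordering $u$ (a chain move, since the underlying set is unchanged), assume $a_1,\dots,a_k\in F^{*}$ and $a_{k+1}=\dots=a_n=0$, so that $w\in W:=\Span(u_1,\dots,u_k)$; it suffices to build the chain inside $W$ and append $u_{k+1},\dots,u_n$. For $k=1$ one rescales $u_1$, and for $k=2$ one replaces $(u_1,u_2)$ by an orthogonal basis of the plane $W$ beginning with $w$ (the case $n=2$). The real content is $k\ge 3$. Put $c_i:=a_i^2\,\q(u_i)\in F^{*}$; since we are in characteristic $2$ and $u$ is orthogonal, the partial sum $w^{(r)}=a_1 u_1+\dots+a_r u_r$ has $\q(w^{(r)})=c_1+\dots+c_r$, with $c_1+\dots+c_k=\q(w)\neq 0$. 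If the $c_i$ are not all equal, then one can reorder $u_1,\dots,u_k$ so that every partial sum $c_1+\dots+c_r$ ($1\le r\le k$) is nonzero — an elementary fact about a finite list of nonzero elements of a characteristic-$2$ field that are not all equal and have nonzero total sum, provable by induction on $k$ by deleting an appropriate last term — and then all the $w^{(r)}$ are anisotropic, so Lemma~\ref{lem:ElementChains} yields a chain from $(u_1,\dots,u_k)$ to an orthogonal basis of $W$ with first vector $w$.

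There remains the case $k\ge 3$ with $c_1=\dots=c_k=:c$. Replacing $u_i$ by $a_i u_i$ makes $(u_1,\dots,u_k)$ an orthogonal basis of $W$ all of whose vectors have value $c$, with $w$ equal to the sum of its members; then for $x=\sum x_i u_i\in W$ one computes $\q(x)=c\sum x_i^2=c\bigl(\sum x_i\bigr)^2$ (characteristic $2$), which vanishes on $w^{\perp}\cap W=\{\sum x_i=0\}$. Thus $w^{\perp}\cap W$ is totally isotropic of dimension $k-1\ge 2$. But $w$ is the first vector of the orthogonal basis $v$, so $w^{\perp}$ has an orthogonal basis and hence an anisotropic vector; such a vector cannot lie in $W$, so $k<n$ — in particular this case does not arise when $F$ is finite, where $n=3$ — and each $u_j$ with $j>k$ is anisotropic. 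Using $F\neq\F_2$, choose $\eps\in F^{*}$ with $\q(u_1+\eps u_j)\neq 0$, and replace the pair $(u_1,u_j)$ by an orthogonal basis $(u_1+\eps u_j,\tilde u_j)$ of the plane $\Span(u_1,u_j)$ (a chain move). Re-expanding $w$ in the new basis raises the number of nonzero coordinates to $k+1$, while a direct computation shows the coefficient of $u_1+\eps u_j$ has $c$-value $c\,\q(u_1)/\q(u_1+\eps u_j)\neq c$; so the $c$-values are no longer all equal and we are back to the previous paragraph.

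The crux, I expect, is precisely this last configuration — the rigid situation in which \emph{every} expression of $w$ through an orthogonal basis of its support has equal, hence (in characteristic $2$) mutually cancelling, coordinates. What makes it tractable is the two observations just used: that when the support is all of $V$ the configuration is outright impossible, which is exactly why the finite-field case can be confined to $\dim_F V=3$; and that an infinite (or merely non-$\F_2$) residue field leaves enough freedom to perturb a single basis vector and break the symmetry without escaping the chain-equivalence class. The reductions of the first two paragraphs and the combinatorial reordering lemma should all be routine.
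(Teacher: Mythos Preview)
Your argument is correct, and it takes a genuinely different route from the paper's proof. Both approaches share the same skeleton---induct on $n$, reduce to matching the first vector, and use Lemma~\ref{lem:ElementChains} to build the chain---but the mechanism for showing that a given anisotropic $w$ is reachable from the basis $u$ differs. The paper works \emph{symmetrically}: it defines the cone $C(u)$ of vectors whose partial sums in the $u$-expansion are all nonzero, and uses the counting Lemma~\ref{lem:q14sols} (requiring $|F|\ge 2(n-1)$, hence the restriction to $n=3$ for finite $F$) to produce a vector in $C(u)\cap C(v)$, then chains both $u$ and $v$ to it. You instead work \emph{asymmetrically}: take $w=v_1$ and show directly that the coefficients of $w$ in $u$ can be reordered so that all partial sums are nonzero. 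Your key new ingredient is the combinatorial reordering fact for nonzero elements in characteristic $2$ that are not all equal; when they \emph{are} all equal you observe that $w^\perp\cap\Span(u_1,\dots,u_k)$ is totally isotropic, forcing $k<n$, and then a single two-vector perturbation (available once $F\neq\F_2$) breaks the symmetry. The paper's version is slicker in that one lemma handles everything uniformly, whereas yours splits into cases; on the other hand, your perturbation step needs only $|F|\ge 3$ rather than $|F|\ge 2(n-1)$, and your treatment of the degenerate configuration (all $c_i$ equal with $k=n$) subsumes the paper's separate ad hoc argument for $F=\F_2$ in dimension $3$. The combinatorial reordering lemma is indeed elementary, though the inductive choice of which term to delete last needs a short argument to rule out the exceptional shape ``$k-1$ copies of $c$ plus one outlier''; this is routine but worth writing out.
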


\begin{proof}
Assume first that $F\neq \F_2$.
We proceed by induction on $n=\dim_F V\geq 0$.
For $n=0,1,2$, there is nothing to prove. 
If $F$ is finite, assume $n=3$, otherwise let $n\geq 3$.
For an orthogonal basis $u=(u_1,u_2,...,u_n)$ of $V$, let 
$C(u)\subset V$ be the set of all vectors $\alpha_1 u_1 + \alpha_2 u_2 + ... + \alpha_n u_n \in V$, \red{with $\alpha_i\in F$}, such that
 $$\alpha_1^2\q(u_1) + \alpha_2^2\q(u_2) + \cdots + \alpha_r^2\q(u_r) \neq 0\hspace{3ex}\text{for all}\ \ r=2,...,n.$$
Let $v=(v_1,v_2,...,v_n)$ be another orthogonal basis of $V$ and consider the corresponding set $C(v)$.
By Lemma \ref{lem:q14sols} below, the intersection $C(u)\cap C(v)$ is non-empty.
Thus, we can choose a vector $u'_1=v'_1\in C(u)\cap C(v)$.
By Lemma \ref{lem:ElementChains} we can extend $u'_1=v'_1$ to orthogonal bases $u'=(u'_1,u'_2,...,u'_n)$ and $v'=(v'_1,v'_2,...,v'_n)$ of $V$ such that $u\approx u'$ and $v\approx v'$.
Now $(u'_2,...,u'_n)$ and $(v'_2,...,v'_n)$ are orthogonal bases of $(u'_1)^{\perp} = (v'_1)^{\perp}$ and thus 
$(u'_2,...,u'_n)\approx (v'_2,...,v'_n)$ by the induction hypothesis.
In particular, $u'\approx v'$ since $u'_1=v'_1$, and
we have proved $u\approx u'\approx v' \approx v.$

For $F=\F_2$ there is only one inner product space $V$ of dimension $3$, namely $\langle 1,1,1\rangle$; see for instance Proposition \ref{prop:GWgens} below.
The only anisotropic vectors of $V$ are the vectors of the standard orthonormal basis $e_1$, $e_2$, $e_3$, and $e=e_1+e_2+e_3$.
The vector $e$ cannot be extended to an orthogonal basis since every vector in its orthogonal complement $e^{\perp} \subset V$ is isotropic.
Thus, the only orthogonal basis of $V$ is $e_1,e_2,e_3$ and the theorem trivially holds.
\end{proof}

\begin{lemma}
\label{lem:q14sols}
Let $n,r\geq 1$ be integers, and let $F$ be a field of characteristic $2$. 
Let $V=F^n$ and let $\q_1,...,\q_r$ be diagonalisable non-trivial homogeneous quadratic forms on $V$.
If $|F|\geq r$, then there is $v\in V$ such that $\q_i(v)\neq 0$ for $i=1,...,r$.    
\end{lemma}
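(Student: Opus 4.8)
The plan is to reduce the problem to a well-known fact about covers of vector spaces by proper subspaces, the key point being that each zero locus $Z_i=\{v\in V:\q_i(v)=0\}$ is itself an $F$-linear subspace of $V$ — a phenomenon special to characteristic $2$.

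\medskip
\noindent\textbf{Step 1: the zero locus of a diagonalisable form is a proper subspace.}
First I would use diagonalisability: after a linear change of coordinates there are $a_{i1},\dots,a_{in}\in F$, not all zero, with $\q_i(x)=\sum_{j=1}^n a_{ij}x_j^2$. In characteristic $2$ the Frobenius $t\mapsto t^2$ is additive, so the function $v\mapsto \q_i(v)$ is additive, $\q_i(v+w)=\q_i(v)+\q_i(w)$, and satisfies $\q_i(cv)=c^2\q_i(v)$ for $c\in F$. Hence $Z_i$, being the kernel of this map, is an additive subgroup stable under scalar multiplication, i.e.\ an $F$-subspace of $V$; and it is \emph{proper} because $\q_i$ is non-trivial (evaluate on the coordinate vectors to see $\q_i\not\equiv 0$). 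Since ``being a linear subspace'' is independent of the chosen basis, $Z_i$ is a proper $F$-subspace for the original form as well. Note this is exactly where diagonalisability is used: for a non-diagonalisable form such as $x_1x_2$ the zero locus is a union of two hyperplanes, not a subspace.

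\medskip
\noindent\textbf{Step 2: a vector space over $F$ with $|F|\geq r$ is not a union of $r$ proper subspaces.}
Suppose for contradiction $V=Z_1\cup\dots\cup Z_r$. Since $n\geq 1$, $V\neq 0$, and a single proper subspace cannot cover $V$, so after discarding redundant members we get an irredundant subcover $V=Z_{i_1}\cup\dots\cup Z_{i_s}$ with $2\leq s\leq r$. Pick $x\in Z_{i_1}$ lying in none of $Z_{i_2},\dots,Z_{i_s}$ (irredundancy) and $y\in V\setminus Z_{i_1}$ ($Z_{i_1}$ is proper). The line $\{\,y+tx : t\in F\,\}$ contains $|F|$ distinct points, none in $Z_{i_1}$ (as $y\notin Z_{i_1}$ while $tx\in Z_{i_1}$), and each $Z_{i_k}$ with $k\geq 2$ contains at most one of them (two would force $x\in Z_{i_k}$). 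Therefore $|F|\leq s-1\leq r-1$, contradicting $|F|\geq r$. Consequently $\bigcup_{i=1}^r Z_i\subsetneq V$, and any $v$ outside this union satisfies $\q_i(v)\neq 0$ for all $i=1,\dots,r$, as required.

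\medskip
\noindent The routine parts here are the additivity computations in Step 1 and the line-counting argument in Step 2. The one point requiring care is obtaining the \emph{sharp} hypothesis $|F|\geq r$ rather than $|F|\geq r+1$: this is precisely why in Step 2 one must first pass to an irredundant subcover before counting points on a line, rather than counting against all $r$ subspaces directly.
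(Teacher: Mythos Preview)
Your proof is correct and takes a genuinely different route from the paper's. The paper argues by induction on $r$: given $v_1$ with $\q_i(v_1)\neq 0$ for $i<r$ and $v_2$ with $\q_r(v_2)\neq 0$, it searches along the ``line'' $\{\eps v_1+v_2:\eps\in F\}$, using the additivity formula $\q_i(\eps v_1+v_2)=\eps^2\q_i(v_1)+\q_i(v_2)$ together with injectivity of Frobenius to find an $\eps$ avoiding $r-1$ bad values. You instead make the structural observation up front that in characteristic~$2$ each zero locus $Z_i$ is an $F$-linear subspace (the same additivity, packaged differently), and then invoke the classical covering lemma that $r$ proper subspaces cannot cover $V$ when $|F|\geq r$. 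Both arguments ultimately hinge on the same char-$2$ additivity of diagonal forms and a pigeonhole count along an affine line; your version isolates the linear-algebra content cleanly and applies a reusable lemma, while the paper's version is entirely self-contained and slightly more constructive (it explicitly builds the witness vector). Your remark that passing to an irredundant subcover is what yields the sharp bound $|F|\geq r$ rather than $|F|\geq r+1$ is a nice point, and indeed matches the bound the paper obtains through its Frobenius count.
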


\begin{proof}
We proceed by induction on $r\geq 1$. 
If $r=1$ the quadratic form $\q_1$ can be written as $\alpha_1 x_1^2 + ... + \alpha_nx_n^2$ in a suitable basis of $V$, \red{$\alpha_i\in F$}.
We can assume $\alpha_1\neq 0$ since $\q_1$ is non-trivial.
Then $v=(1,0,...,0)$ satisfies $\q_1(v)=\alpha_1\neq 0$.
Assume $r \geq 2$. 
By induction hypothesis, we can pick $v_1 \in V$ such that $\q_i(v_1) \neq 0$ for $ i =1,2,...,r-1$.
If $\q_r(v_1) \neq 0$ then we are done. 
Otherwise, pick $v_2 \in V$ such that $\q_r(v_2) \neq 0$, and
choose $\eps \in F$ such that $\eps^2$ is not in the set
\begin{equation*}
 \left\{\frac{\q_i(v_2)}{\q_i(v_1)} \mid 1 \leq i \leq r-1\right\}
\end{equation*} 
of cardinality at most $r-1$.
Note that such an $\eps$ exists because the Frobenius morphism $F \rightarrow F, u \mapsto u^2$ is injective, and hence the set $\{\eps^2 \mid \eps \in F\}$ contains $|F|\geq r$ many elements. 
Then the vector $v=\eps v_1+v_2$ satisfies $\q_i(v)\neq 0$ for $i=1,...,r$ since 
$$\q_i(\eps v_1 + v_2) = \q_i(\eps v_1) + \q_i(v_2) = \eps^2 \q_i(v_1) + \q_i(v_2) \neq 0\hspace{3ex}\text{for}\ \ i=1,...,r-1,$$ and  $\q_r(\eps v_1 + v_2) = \eps^2 \q_r(v_1) + \q_r(v_2) = \q_r(v_2) \neq 0$. 
\end{proof}

In order to prove Theorem \ref{thm:ChainLemma} for finite fields of characteristic $2$ other than $\F_2$ we need the following lemma.

\begin{lemma}
\label{lem:eChaineHat}
Let $F\neq \F_2$ be a finite field of characteristic 2, and let $n\geq 4$ be an even integer.
Assume that any two orthogonal bases of an inner product space over $F$ of dimension smaller than $n$ are chain equivalent.
Then the standard orthonormal bases $e$ and the orthogonal basis $\hat{e}$ of $\langle 1,1,...,1\rangle = \langle 1 \rangle ^{\oplus n}$ below are chain equivalent:
$$e=(e_1,e_2,...,e_n) \approx \hat{e}= (\hat{e}_1, \hat{e}_2,...,\hat{e}_n)$$
where $\hat{e}_r = \sum_{1 \leq i \neq r \leq n }e_i$.
\end{lemma}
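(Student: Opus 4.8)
The plan is to reduce to the chain lemma in dimension $n-1$ (which we are allowed to assume) by producing a single anisotropic vector $w\in\langle 1\rangle^{\oplus n}$ that can be completed both to an orthogonal basis chain equivalent to $e$ and to one chain equivalent to $\hat e$; since the two completions then share their first vector, their tails are orthogonal bases of the $(n-1)$-dimensional space $w^{\perp}$, hence chain equivalent by hypothesis, and $e\approx\hat e$ follows. The arithmetic used throughout is that on $\langle 1\rangle^{\oplus n}$ in characteristic $2$ one has $\q(\sum a_ie_i)=\sum a_i^2=(\sum a_i)^2$, so a vector is anisotropic precisely when its coordinate sum is a unit, and likewise $\q(\sum b_r\hat e_r)=\sum b_r^2$ because $\hat e$ is orthogonal with $\q(\hat e_r)=n-1\equiv 1$; evenness of $n$ is exactly what makes $\b(\hat e_r,\hat e_s)=n-2\equiv 0$ and $\q(\hat e_r)$ a unit.

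Using the hypothesis $F\neq\F_2$, fix $t\in F$ with $t\neq 0,1$, so also $1+t\neq 0$. I would take $w=te_1+e_2+\cdots+e_n$, which is anisotropic since its coordinate sum is $t+(n-1)=t+1\neq 0$. Applying Lemma \ref{lem:ElementChains} to the ordered basis $(e_1,\dots,e_n)$ and $v_1=w$: the partial combination $te_1+e_2+\cdots+e_r$ has $\q$-value $t^2+(r-1)$, equal to $t^2$ for $r$ odd and to $(1+t)^2$ for $r$ even, hence a unit for every $r\geq 2$; thus $w$ extends to an orthogonal basis $u'=(w,u'_2,\dots,u'_n)$ with $e\approx u'$. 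For the second completion, a short computation (equivalently, using that $e_i\mapsto\hat e_i$ is an isometric involution when $n$ is even, so $e_i=\sum_{j\neq i}\hat e_j$) gives $w=\hat e_1+t\hat e_2+\cdots+t\hat e_n$; applying Lemma \ref{lem:ElementChains} to $(\hat e_1,\dots,\hat e_n)$ and $v_1=w$, the partial combination $\hat e_1+t(\hat e_2+\cdots+\hat e_r)$ has $\q$-value $1+t^2(r-1)\in\{1,(1+t)^2\}$, again a unit for every $r\geq 2$; thus $w$ extends to an orthogonal basis $v'=(w,v'_2,\dots,v'_n)$ with $\hat e\approx v'$.

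To conclude, $(u'_2,\dots,u'_n)$ and $(v'_2,\dots,v'_n)$ are orthogonal bases of the non-degenerate $(n-1)$-dimensional subspace $w^{\perp}$, hence chain equivalent over $F$ by the standing assumption; adjoining the common vector $w$ to each basis of such a chain raises every intersection cardinality by one, so it remains a valid chain in $V$, giving $u'\approx v'$ and therefore $e\approx u'\approx v'\approx\hat e$.

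I expect the only delicate point to be the coordinated choice of $w$ and of the two orderings, so that all partial sums stay anisotropic simultaneously in the $e$- and $\hat e$-coordinates: the naive candidate $w=e_1$ fails, because moving outward from the standard basis one immediately meets isotropic vectors (any sum of an even number of distinct $e_i$ is isotropic), and it is precisely to circumvent this that one needs a scalar $t\notin\{0,1\}$, i.e.\ the hypothesis $F\neq\F_2$. Everything else is routine bookkeeping with Lemma \ref{lem:ElementChains} and the lower-dimensional chain lemma being assumed.
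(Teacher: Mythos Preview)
Your proof is correct and follows essentially the same strategy as the paper's: invoke Lemma~\ref{lem:ElementChains} twice to reach, from both $e$ and $\hat e$, orthogonal bases sharing a common first vector, then apply the inductive hypothesis to its orthogonal complement. The only difference is cosmetic: the paper chooses the common vector to be $b_1\hat e_1+b_n\hat e_n$ (sparse in the $\hat e$-basis) for $b_1,b_n,b_1+b_n\neq 0$, whereas you take $w=te_1+e_2+\cdots+e_n=\hat e_1+t\hat e_2+\cdots+t\hat e_n$, exploiting the involution $e_i\leftrightarrow\hat e_i$; both verifications of the partial-sum conditions reduce to the same arithmetic $\{t,1+t\}\subset F^*$.
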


\begin{proof}
The orthogonal basis $e=(e_1,e_2,...,e_n)$ is chain equivalent to an orthogonal basis $u=(u_1,...,u_n)$ with $u_1=a_1e_1+ \cdots +a_ne_n$ if  for $r=1,...,n$ we have $\sum_{1 \leq i \leq r }a_i \neq 0$;
see Lemma \ref{lem:ElementChains}.
Similarly, $\hat{e}=(\hat{e}_1, \hat{e}_2,...,\hat{e}_n)$ is chain equivalent to an orthogonal basis $v=(v_1,...,v_n)$ with $v_1=b_1\hat{e}_1+ \cdots +b_n\hat{e}_n$ if  for $r=1,...,n$ we have $\sum_{1 \leq i \leq r }b_i \neq 0$.
Note that
$$v_1=b_1\hat{e}_1+ \cdots + b_n\hat{e}_n = \hat{b}_1 e_1 + \cdots + \hat{b}_ne_n$$
where $\hat{b}_r = \sum_{1 \leq i \neq r \leq n }b_i$.
Choose elements $b_1,b_n\in F$ such that $b_1,b_n,b_1+b_n\neq 0$.
This is possible since $F$ has more than $2$ elements.
Set $b_i=0$ for $1<i<n$ and $a_i=\hat{b}_i$.
Then 
$$\hat{b}_i= \left\{\begin{array}{cl}b_n & i=1\\ b_1+b_n & 1<i<n \\ b_1 & i=n \end{array}\right.$$
and therefore, for $r=1,...,n$, we have 
$$\sum_{1 \leq i \leq r }a_i = \sum_{1 \leq i \leq r }\hat{b}_i  = 
\left\{\begin{array}{cl} b_n & 1 \leq r <n,\ r\ \text{odd}\\ b_1 & 1 \leq r <n,\ r\ \text{even}\\ b_1+b_n & r=n,
\end{array}\right.
$$
and 
$$\sum_{1 \leq i \leq r }b_i  = 
\left\{\begin{array}{cl} b_1 & 1 \leq r <n\\ b_1+b_n &  r=n.
\end{array}\right.
$$
In particular, the last two sums are non-zero for $r=1,...,n$.
Hence, there are orthogonal bases $u$ and $v$ as above with $e \approx u$, $\hat{e}\approx v$ and $u_1=v_1$.
By assumption applied to the inner product space $u_1^{\perp}=v_1^{\perp}$ of dimension $n-1$, we have $(u_2,...,u_n) \approx (v_2,...,v_n)$.
Therefore, 
$$e \approx u \approx v \approx \hat{e}.$$
\end{proof}

\begin{example}
As an illustration of Lemma \ref{lem:eChaineHat}, the following explicitly shows that $(e_1,e_2,e_3,e_4) \approx (\hat{e}_1, \hat{e}_2,\hat{e}_3,\hat{e}_4) \ \red{\in \langle 1,1,1,1\rangle}$ over $\F_4=\F_2[\alpha]/(\alpha^2+\alpha+1)$ where we set $\beta=1+\alpha$ and note that  $\alpha\beta=1$, $\alpha+\beta=1$, $\alpha^2=\beta$, $\beta^2=\alpha$:
{\tiny
$$\renewcommand\arraystretch{1.2}
\begin{array}{rlccr}
 & (e_1,&e_2,&e_3,&e_4)\\
 \approx & (\alpha e_1+\beta e_2,&\beta e_1+ \alpha e_2,&e_3,&e_4)\\
 \approx & ( e_1+\alpha e_2+\alpha e_3,&\beta e_1+ \alpha e_2,&\beta e_1 +  e_2+\beta e_3,&e_4)\\
  \approx & (\beta e_1+ e_2+ e_3+\alpha e_4,& \beta e_1+ \alpha e_2 ,&  \beta e_1 +  e_2+\beta e_3,&\alpha  e_1 + \beta e_2 + \beta e_3 + \beta e_4)\\
 \approx & (\beta e_1+ e_2+ e_3+\alpha e_4,&\beta e_1+ \alpha e_2,&  e_1 +  \alpha e_2+\beta e_3+ e_4 ,&  \beta  e_3 + \alpha e_4)\\
 \approx & (\beta e_1+ e_2+ e_3+\alpha e_4,&e_1 +  \beta e_2+\alpha e_3+ e_4,&  e_1 +  \alpha e_2+\beta e_3+ e_4 ,&\alpha  e_1 + e_2 + e_3 + \beta e_4)\\
 \approx & (\beta e_1+ e_2+ e_3+\alpha e_4,& e_1+ e_3 +  e_4,&  e_1 +  e_2+e_4,&\alpha  e_1 +  e_2 +  e_3 + \beta e_4)\\
 \approx & ( e_2+ e_3+ e_4,& e_1+ e_3 +  e_4,&  e_1 +  e_2+e_4,&  e_1 +  e_2 +  e_3 )\\
= & (\hat{e}_1,&\hat{e}_2,&\hat{e}_3,&\hat{e}_4)
\end{array}
$$
}
In contrast, over $\F_2$ we have  $(e_1,e_2,e_3,e_4) \not\approx (\hat{e}_1, \hat{e}_2,\hat{e}_3,\hat{e}_4)$; see Remark \ref{rmk:NoF2Chain}.
\end{example}

\begin{theorem}
\label{thm:FiniteFieldChainLem}
Let $F$ be a finite field of characteristic 2 such that $F\neq \F_2$.
Let $V$ be an inner product space over $F$.
Then any two orthogonal bases of $V$ are chain equivalent.
\end{theorem}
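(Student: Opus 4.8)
The plan is to prove the theorem by strong induction on $n=\dim_F V$, alongside two simplifications. First, over a finite field $F$ of characteristic $2$ the Frobenius $x\mapsto x^2$ is surjective, so every unit is a square and $\langle a\rangle\cong\langle 1\rangle$ for all $a\in F^*$; hence any inner product space over $F$ that has an orthogonal basis is isometric to $\langle 1\rangle^{\oplus n}$, and the statement is vacuous otherwise. Since chain equivalence is an equivalence relation, it is enough to show that every orthogonal basis of $\langle 1\rangle^{\oplus n}$ is chain equivalent to the standard orthonormal basis $e=(e_1,\dots,e_n)$. The cases $n\le 3$ are covered by Theorem \ref{thm:FieldChainLem} (for $n=3$ this uses $|F|\ge 4$), which gives the base of the induction.

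For the inductive step, fix $n\ge 4$ and an orthogonal basis $f=(f_1,\dots,f_n)$ of $\langle 1\rangle^{\oplus n}$, and write $f_1=\sum_i a_ie_i$ with $a_i=\b(f_1,e_i)$; since $\q(f_1)=\sum_i a_i^2=(\sum_i a_i)^2$, anisotropy of $f_1$ gives $\sum_i a_i\ne 0$. Everything hinges on a dichotomy for the multiset $\{a_1,\dots,a_n\}$: either it can be reordered as $a_{i_1},\dots,a_{i_n}$ so that every partial sum $a_{i_1}+\dots+a_{i_r}$ with $2\le r\le n$ is nonzero, or $\{a_1,\dots,a_n\}$ consists of $k\ge 2$ copies of a single nonzero value $c$ together with zeros, that is, $f_1=c(e_{j_1}+\dots+e_{j_k})$. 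In the latter case anisotropy of $f_1=c\sum_l e_{j_l}$ forces $k$ to be odd, hence $k\ge 3$; and since $f_1^\perp$ contains the orthogonal basis $(f_2,\dots,f_n)$ it is non-alternating of dimension $n-1$, whereas $f_1^\perp\cap\operatorname{Span}(e_{j_1},\dots,e_{j_k})$ is totally isotropic, which forces $k\le n-1$.

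In the reorderable case, applying the corresponding permutation to $e$ (a sequence of two-place chain moves) we may assume $f_1\in C(e)$ in the sense of Theorem \ref{thm:FieldChainLem}; then Lemma \ref{lem:ElementChains} extends $f_1$ to an orthogonal basis $(f_1,g_2,\dots,g_n)$ chain equivalent to $e$, and $(g_2,\dots,g_n)$ and $(f_2,\dots,f_n)$ are orthogonal bases of the $(n-1)$-dimensional space $f_1^\perp$, hence chain equivalent by the induction hypothesis; therefore $e\approx f$.

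The exceptional vectors $f_1=c(e_{j_1}+\dots+e_{j_k})$ with $k$ odd, $3\le k\le n-1$, are the crux: they cannot occur as the first vector of an orthogonal basis obtained from $e$ by permutations and Lemma \ref{lem:ElementChains}, since once two copies of $c$ have been added, all further partial sums of the coefficients vanish. To reach them I would pass to the subspace $W=\operatorname{Span}(e_{j_1},\dots,e_{j_{k+1}})$ for an arbitrary further index $j_{k+1}$; then $\dim W=k+1$ is even with $4\le k+1\le n$, and inside $W\cong\langle 1\rangle^{\oplus(k+1)}$ the vector $e_{j_1}+\dots+e_{j_k}$ is precisely the basis vector $\hat e_{j_{k+1}}$ of Lemma \ref{lem:eChaineHat}. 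Because chain equivalence holds in all dimensions smaller than $n$, hence in all dimensions smaller than $k+1$, Lemma \ref{lem:eChaineHat} applies in $W$ and gives that the standard basis of $W$ is chain equivalent to $\hat e$; reordering $\hat e$ to put $\hat e_{j_{k+1}}$ first, rescaling this first vector by $c$ (a one-place move), and leaving the $e_i$ with $i\notin\{j_1,\dots,j_{k+1}\}$ untouched, we obtain that $e$ is chain equivalent in $\langle 1\rangle^{\oplus n}$ to an orthogonal basis whose first vector is $f_1$. We then conclude as in the reorderable case, using the induction hypothesis on $f_1^\perp$. The main obstacle is exactly this exceptional case — the whole purpose of Lemma \ref{lem:eChaineHat} is to handle it — together with the elementary but slightly delicate combinatorial fact underlying the dichotomy, where one must place the exceptional coefficient in the interior of the ordering rather than greedily at the end.
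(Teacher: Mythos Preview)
Your overall strategy is correct and closely parallels the paper's: the induction on $n$, the reduction to $\langle 1\rangle^{\oplus n}$, the base case $n\le 3$ via Theorem~\ref{thm:FieldChainLem}, the bound $k\le n-1$, and the appeal to Lemma~\ref{lem:eChaineHat} in the exceptional case all match. The difference lies in how the exceptional configuration is reached. The paper does not fix the comparison basis as the standard $e$; instead, given $v$ and $w$, it ranges over \emph{all} orthogonal bases $u$ chain-equivalent to $v$ and chooses one minimising the number $r$ of nonzero coefficients of $w_1$ in the $u$-basis. For $r\le 2$ one concludes directly; for $r\ge 3$, minimality forces each pair $a_iu_i+a_ju_j$ ($i,j\le r$) to be isotropic---otherwise combining them into a single anisotropic vector would lower $r$---whence, after normalising $\q(u_i)=1$, all the $a_i$ coincide. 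This lands immediately in your exceptional case $w_1=u_1+\cdots+u_r$, $r$ odd, $r<n$, and from there both arguments invoke Lemma~\ref{lem:eChaineHat} in the same way.

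Your route replaces this extremal argument by a combinatorial dichotomy on the coefficient multiset relative to the \emph{fixed} basis $e$. That dichotomy is true, but you have only asserted it; the remark about placing the exceptional coefficient ``in the interior'' is a hint, not a proof. One clean way to fill it: append the zero coefficients at the end (harmless since the running sum then stays at the nonzero total), and induct on the number $k$ of nonzero coefficients by choosing a last element $b_\ell\ne\sum_i b_i$ so that the remaining $k-1$ values still take at least two distinct values---this needs a short case split on whether exactly two or at least three distinct nonzero values occur. The paper's minimality trick buys precisely this lemma for free, which is what makes it slicker; your version, once the dichotomy is written out, has the compensating virtue that the link to Lemma~\ref{lem:eChaineHat} is completely explicit, the exceptional $f_1$ being visibly $c\,\hat e_{j_{k+1}}$ in a coordinate subspace.
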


\begin{proof}
We proceed by induction on the dimension $n =\dim_FV$ of $V$.
For $n=0,1,2$, there is nothing to prove, and the case $n=3$ was treated in Theorem \ref{thm:FieldChainLem}.
Thus, we can assume $n\geq 4$.
Let $v=(v_1,v_2,...,v_n)$ and  $w=(w_1,w_2,w_3,...,w_n)$ be two orthogonal bases of $V$.
Among all orthogonal bases of $V$ that are chain equivalent to $v$ choose one, say $u=(u_1,u_2,u_3,...,u_n)$,
such that for the linear combination $w_1 = a_1u_1 + \cdots + a_nu_n$ the number $r$ of non-zero coefficients $a_i\neq 0$ is minimal. 
Reordering, we can assume $a_1,...,a_r \neq 0$ and $a_{r+1}=\cdots =a_n=0$.
Clearly $1 \leq r \leq n$.
If $r=1$ then $v \approx u \approx (w_1,u_2,u_3,...,u_n) \approx (w_1,w_2,...,w_n)$ since $(u_2,u_3,...u_n) \approx (w_2,...,w_n)$, by induction hypothesis applied to the orthogonal complement $w_1^{\perp}$ of $w_1$ inside $V$.
If $r = 2$ then 
$v \approx u \approx (w_1,u'_2,u_3,...,u_n)$
where $u'_2$ is a non-zero vector of the orthogonal complement of $w_1$ inside of $Fu_1 \perp Fu_2$.
Then $v \approx (w_1,u'_2,u_3,...,u_n) \approx (w_1,w_2,...,w_n)$ since $(u'_2,u_3,...u_n) \approx (w_2,...,w_n)$, by induction hypothesis applied to the orthogonal complement $w_1^{\perp}$ of $w_1$ inside $V$.
Assume $r\geq 3$.
Since every element in $F$ is a square, we can rescale and assume $\q(u_i)=\q(w_i)=1$, $i=1,...,n$ as rescaling yields chain equivalent bases.
Assume that there is a pair $1 \leq i\neq j\leq r$ such that $a_iu_i+a_ju_j$ is anisotropic.
After reordering, we can assume $i=1$, $j=2$.
Set $u_1'=a_1u_1+a_2u_2$, and let $u_2'$ be a non-zero vector in the orthogonal complement of $u_1'$ inside $Fu_1\perp Fu_2$.
Then $u \approx (u_1',u_2',u_3,...u_n)$ and $w_1=u'_1 + a_3u_3 + \cdots +a_ru_r$ contradicting minimality of $r$.
Thus, for all pairs $1\leq i,j \leq r$, the vector $a_iu_i+a_ju_j$ is isotropic, that is, $0 = \q(a_iu_i+a_ju_j)=a_i^2\q(u_i) + a_j^2\q(u_j) = a_i^2+a_j^2=(a_i+a_j)^2$, so $a_i+a_j=0$, for $1\leq i\leq r$, that is,  $a=a_1=a_2=a_3=\cdots =a_r \neq 0$.
Then $w_1=a(u_1+ \cdots + u_r)$.
Since $1= \q(w_1)=a^2 (\q(u_1) + \cdots +\q(u_r))=ra^2$, the positive integer $r$ is odd.
Therefore, $1=ra^2=a^2$ implies $a=1$, and we have
$w_1=u_1+ \cdots +u_r$.
If $r<n$, we can use Lemma \ref{lem:eChaineHat} to find an orthogonal basis $u_2',...,u'_{r+1}$ of $Fu_2 \perp ...\perp Fu_{r+1}$ such that $(u_1,...,u_{r+1}) \approx (w_1,u'_2,...,u'_{r+1})$.
Then
$$v \approx (u_1,u_2,u_3,...,u_n) \approx (w_1,u'_2,u_3,...,u'_{r+1},u_{r+2},...,u_n) \approx w$$
since $(u'_2,u_3,...,u'_{r+1},u_{r+2},...,u_n) \approx (w_2,w_3,...,w_n)$, by the induction hypothesis applied to $w_1^{\perp}$ inside $V$.
Finally, the case $r=n$ is impossible.
Indeed, if $r=n$, then every vector in $w_1^{\perp}\subset V$ is isotropic contradicting the the assumption that $(w_2,...,w_n)$ is an orthogonal basis of $w_1^{\perp}$.
\end{proof}

\begin{proof}[Proof of Theorem \ref{thm:ChainLemma}]
The analog of Theorem \ref{thm:FieldChainLem} for fields $F$ of characteristic not $2$ is classical \red{\cite[Satz 7]{Witt}} and holds without restriction on the size of $F$; see for instance \cite[Theorem I.5.2]{Lam:book}. 
Together with Theorems \ref{thm:FieldChainLem} and \ref{thm:FiniteFieldChainLem}, this implies Theorem \ref{thm:ChainLemma} in view of Lemma \ref{lem:ChainReductionRtoF}.
\end{proof}

\begin{remark}
\label{rmk:NoF2Chain}
The Chain Lemma does not hold for $R=F=\F_2$ and $V=\F_2^4$ equipped with the form \red{$\langle 1,1,1,1 \rangle$}.
The orthogonal basis $e = \{e_1,e_2,e_3,e_4\}$ is only chain equivalent to itself since $\langle 1 \rangle \perp \langle 1 \rangle$ has unique orthogonal basis $\{e_1,e_2\}$.
But $V$ has also orthogonal basis $\hat{e}=\{\hat{e}_1,\hat{e}_2,\hat{e}_3,\hat{e}_4\}$ where $\hat{e}_i = e_1+e_2+e_3 +e_4 -e_i$ for $i=1,...,4$.
In particular, the two orthogonal basis $e$ and $\hat{e}$ of $V$ are not chain equivalent.
\end{remark}

\section{Presentation of $GW(R)$}
\label{sec:Presentation}

For an invertible symmetric matrix $A\in M_n(R)$, we denote by $\langle A \rangle$ the inner product space $R^n$ equipped with the form $\red{\b( x,y)} ={^tx}Ay$, $x,y\in R^n$ where ${^tx}$ denotes the transpose of the column vector $x$.
The following shows that every inner product space stably admits an orthogonal basis. 
In particular, the ring homomorphism (\ref{eqn:ZRGWringhomo}) is surjective.

\begin{proposition}
\label{prop:GWgens}
Let $(R,\m,F)$ be a commutative local ring.
\begin{enumerate}
\item
\label{prop:GWgens:item1}
For any inner product space \red{$V$} over $R$ there is an isometry 
$$\red{V} \cong \left\langle u_{1}\right\rangle \perp\dots\perp\left\langle u_{l}\right\rangle \perp N_{1}\perp\dots\perp N_{r}$$
for some $u_{i}\in R^{*}$ and $N_{i}=\left\langle \left(\begin{smallmatrix}a_{i} & 1\\
1 & b_{i}
\end{smallmatrix}\right)\right\rangle$ with $a_{i},b_{i}\in \m$.
\item
\label{prop:GWgens:item2}
For any $a,b\in \m$ there is an isometry of inner product spaces
$$\left\langle \left( \begin{matrix}a & 1\\
1 & b
\end{matrix}\right) \right\rangle + \langle -1 \rangle \cong \left\langle \frac{1-ab}{(-1+a)(-1+b)}\right\rangle + \langle -1+a \rangle + \langle -1+b\rangle.$$
\item
\label{prop:GWgens:item3}
For any inner product space $V$ over $R$, there is an inner product space $W$ with orthogonal basis such that $V\perp W$ has an orthogonal basis.
In particular, the Grothendieck-Witt group $GW(R)$ of inner product spaces is additively generated by one-dimensional spaces $\langle u \rangle$, $u \in R^*$.
\end{enumerate}
\end{proposition}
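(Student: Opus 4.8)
The plan is to prove the three parts in order: (\ref{prop:GWgens:item1}) reduces an arbitrary inner product space to rank-one and rank-two blocks, (\ref{prop:GWgens:item2}) disposes of each rank-two block after adding a copy of $\langle -1\rangle$, and (\ref{prop:GWgens:item3}) then follows by combining these.

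For (\ref{prop:GWgens:item1}) I would induct on $n=\rk V$. For $n\le 1$ there is nothing to show, non-degeneracy forcing $V\cong\langle u\rangle$ with $u\in R^{*}$. Let $n\ge 2$. If $V$ has an anisotropic vector $v$, then $V=\langle\q(v)\rangle\perp v^{\perp}$ and we apply the induction hypothesis to $v^{\perp}$. Otherwise $\q(v)\in\m$ for all $v\in V$. Fix a basis $x_1,\dots,x_n$ of $V$; its Gram matrix is invertible, hence has a first row that is nonzero modulo $\m$, so $\b(x_1,x_j)\in R^{*}$ for some $j$ and, replacing $x_j$ by a unit multiple, we may take $y\in V$ with $\b(x_1,y)=1$. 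Then $x_1,y$ are linearly independent, and the Gram matrix of the pair $(x_1,y)$ is $\left(\begin{smallmatrix}\q(x_1) & 1\\ 1 & \q(y)\end{smallmatrix}\right)$, of determinant $\q(x_1)\q(y)-1\in R^{*}$ since $\q(x_1),\q(y)\in\m$. Hence $N:=Rx_1\oplus Ry$ is non-degenerate, so it is an orthogonal direct summand: $V=N\perp N^{\perp}$ with $N\cong\langle\left(\begin{smallmatrix}a & 1\\ 1 & b\end{smallmatrix}\right)\rangle$, $a=\q(x_1),\,b=\q(y)\in\m$. Apply the induction hypothesis to $N^{\perp}$.

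For (\ref{prop:GWgens:item2}) I would write down an explicit orthogonal basis of the left-hand side. Let $e_1,e_2,e_3$ be the standard basis, so $\q(e_1)=a$, $\q(e_2)=b$, $\b(e_1,e_2)=1$, $\q(e_3)=-1$, and $e_3$ is orthogonal to $e_1,e_2$; put
$$f_1=(b-1)e_1+(a-1)e_2+(ab-1)e_3,\qquad f_2=e_1+e_3,\qquad f_3=e_2+e_3.$$
A direct computation gives $\b(f_i,f_j)=0$ for $i\neq j$, that the matrix of $(f_1,f_2,f_3)$ in terms of $(e_1,e_2,e_3)$ has determinant $(a-1)(b-1)$, and that $\q(f_2)=-1+a$, $\q(f_3)=-1+b$, $\q(f_1)=(a-1)(b-1)(1-ab)$. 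Since $a,b\in\m$, the elements $-1+a,\,-1+b,\,1-ab$ are units, so $(f_1,f_2,f_3)$ is a basis; replacing $f_1$ by $(a-1)^{-1}(b-1)^{-1}f_1$ rescales $\q(f_1)$ to $\frac{1-ab}{(-1+a)(-1+b)}$, giving the asserted isometry. Finding this change of basis — and noticing that the "third" diagonal entry is only determined up to the square unit $(a-1)^{2}(b-1)^{2}$, which is exactly why the stated value has denominator $(-1+a)(-1+b)$ — is the one genuine obstacle; the rescaling of $f_1$ absorbs the ambiguity.

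Finally, (\ref{prop:GWgens:item3}) is formal. By (\ref{prop:GWgens:item1}), $V\cong\langle u_1\rangle\perp\cdots\perp\langle u_l\rangle\perp N_1\perp\cdots\perp N_r$ with the $N_i$ as above; taking $W=\langle -1\rangle^{\oplus r}$, which visibly has an orthogonal basis, and applying (\ref{prop:GWgens:item2}) to each $N_i\perp\langle -1\rangle$ shows $V\perp W$ is an orthogonal sum of rank-one spaces, hence has an orthogonal basis. For the last sentence, any element of $GW(R)$ is a difference $[V]-[V']$, and $[V]=[V\perp W]-[W]$ (similarly for $V'$) exhibits it as a $\Z$-linear combination of the classes $[\langle u\rangle]$, $u\in R^{*}$; equivalently, the ring homomorphism (\ref{eqn:ZRGWringhomo}) is surjective. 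The only inputs beyond the two parts are the standard facts that a non-degenerate subspace of an inner product space is an orthogonal direct summand and that an invertible matrix over a local ring has no row lying entirely in $\m$.
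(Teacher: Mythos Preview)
Your proof is correct and follows essentially the same approach as the paper's. For part (\ref{prop:GWgens:item1}) you run the same split-off-anisotropic-then-rank-two argument (phrased as a single induction rather than two passes, but this is cosmetic); for part (\ref{prop:GWgens:item2}) you exhibit an explicit orthogonal basis just as the paper does --- your vectors $f_2=e_1+e_3$, $f_3=e_2+e_3$ differ from the paper's $-e_1+e_3$, $-e_2+e_3$ only by signs, and your $f_1$ is the paper's first vector up to the scalar $-(a-1)(b-1)$ you later divide out; and part (\ref{prop:GWgens:item3}) is deduced identically.
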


\begin{proof}
For part (\ref{prop:GWgens:item1}),
if $\red{\q(x)=\b(x, x)} = u \in R^*$ is a unit for some $x\in \red{V}$ then $\red{V} = Rx \perp (Rx)^{\perp}$ is a decomposition into non-degenerate subspaces, and $Rx = \langle u \rangle$.
Hence, repeatedly splitting off one-dimensional inner product spaces, we can write 
$\red{V}=\left\langle u_{1}\right\rangle \perp\dots\perp\left\langle u_{l}\right\rangle \perp N$
where $u_{i}\in R^{*}$ and $\q(x)\in \m$ for all $x\in N$. 
If $N\neq 0$ then the rank of $N$ is at least $2$, and we can find $x,y \in N$ such that $\b(x, y) = 1$.
The subspace  $N_1$ spanned by $x$ and $y$ is non-degenerate with Gram matrix 
$\left( \begin{smallmatrix}a & 1\\
1 & b
\end{smallmatrix}\right)$
where $a = \q(x)$ and $b = \q(y)$.
In particular, $N = N_1\perp N_1^{\perp}$ is a decomposition into non-degenerate subspaces, and
$N_1 = \left\langle \left(\begin{smallmatrix}a & 1\\
1 & b
\end{smallmatrix}\right)\right\rangle$.
Now we keep splitting off rank $2$ spaces $N_i$ to obtain the desired form.

Part (\ref{prop:GWgens:item2}) follows from the equation \red{in $M_3(R)$}
$$
\begin{array}{l}
\left( \begin{matrix} 
-\frac{1}{-1+a} & -\frac{1}{-1+b}  & \frac{-1+ab}{(-1+a)(-1+b)}\\
-1 & 0 & 1\\
0 & -1 & 1
\end{matrix}\right)
\left( \begin{matrix}a & 1 & 0 \\
1 & b & 0 \\ 0 & 0 & -1
\end{matrix}\right) 
\left( \begin{matrix} -\frac{1}{-1+a}  & -1 & 0 \\
-\frac{1}{-1+b} & 0 & -1 \\ \frac{-1+ab}{(-1+a)(-1+b)} & 1& 1
\end{matrix}\right)\\
\\
= 
\left( \begin{matrix}
\frac{1-ab}{(-1+a)(-1+b)} & 0 & 0\\
0 & -1+a & 0\\
0 & 0 & -1+b
\end{matrix}\right).
\end{array}$$
Finally, (\ref{prop:GWgens:item3}) follows from (\ref{prop:GWgens:item1}) and (\ref{prop:GWgens:item2}).
\end{proof}

\begin{lemma}
\label{lem:kerpiGens}
  Let $(R,\m,F)$ be a commutative local ring with residue field $F\neq \F_2$.
  Then the kernel $\ker(\pi)$ of the ring homomorphism (\ref{eqn:ZRGWringhomo}) is generated as abelian subgroup of $\mathbb{Z}[R^*]$ by the following elements: 
  \begin{center}
$\langle \alpha \rangle - \langle \beta \rangle$ with $\alpha,\beta\in R^{*}$ and $\langle \alpha \rangle \cong  \langle \beta \rangle $
\vspace{1ex}

$\langle \alpha \rangle + \langle \beta \rangle - \langle \gamma \rangle - \langle \delta \rangle$ with $\alpha,\beta,\gamma,\delta\in R^{*}$ and $\langle \alpha,\beta \rangle \cong  \langle \gamma,\delta \rangle $.
\end{center}
  
\end{lemma}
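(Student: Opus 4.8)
The plan is to show that the subgroup $H \subseteq \ker(\pi)$ generated by the listed elements equals all of $\ker(\pi)$. The inclusion $H \subseteq \ker(\pi)$ is clear: isometric inner product spaces have the same class in $GW(R)$, so $\langle\alpha\rangle - \langle\beta\rangle$ and $\langle\alpha\rangle + \langle\beta\rangle - \langle\gamma\rangle - \langle\delta\rangle$ map to $0$. The real content is the reverse inclusion. First I would reduce to a normal-form statement: by Proposition \ref{prop:GWgens}(\ref{prop:GWgens:item3}), every element of $GW(R)$ is a $\Z$-linear combination of classes $\langle u\rangle$, so $\pi$ is surjective and $\Z[R^*]/H$ surjects onto $GW(R)$; it suffices to show this surjection is injective, equivalently that whenever $\pi(x) = \pi(y)$ for $x,y \in \Z[R^*]$ we have $x - y \in H$.

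The standard strategy here is to use the group-completion/monoid description of $GW(R)$. An element of $GW(R)$ is a formal difference $[V] - [V']$ of inner product spaces, and $[V] - [V'] = [W] - [W']$ in $GW(R)$ precisely when $V \perp W' \perp T \cong W \perp V' \perp T$ for some inner product space $T$ (cofinality/cancellation in the monoid). So the key step is: given two inner product spaces $V$ and $V'$ with orthogonal bases, say $V = \langle a_1,\dots,a_n\rangle$ and $V' = \langle b_1,\dots,b_n\rangle$ (same rank, after adding a common space $T$ which by Proposition \ref{prop:GWgens}(\ref{prop:GWgens:item3}) can be taken to have an orthogonal basis), if $V \cong V'$ then $\sum \langle a_i\rangle - \sum\langle b_i\rangle \in H$. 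This is exactly where the Chain Lemma enters: by Theorem \ref{thm:ChainLemma}, the orthogonal bases $(a_1,\dots,a_n)$ and $(b_1,\dots,b_n)$ of $V \cong V'$ are chain equivalent, so there is a finite chain of orthogonal bases in which consecutive ones differ in at most two slots. Each single step changes $\langle a_1,\dots,a_n\rangle_\Z$ by an element of the form $\langle\alpha\rangle + \langle\beta\rangle - \langle\gamma\rangle - \langle\delta\rangle$ with $\langle\alpha,\beta\rangle \cong \langle\gamma,\delta\rangle$ (the two rank-$2$ summands involved, which are isometric because both are sub-bases of the same $2$-dimensional orthogonal complement), or by an element $\langle\alpha\rangle - \langle\beta\rangle$ with $\langle\alpha\rangle\cong\langle\beta\rangle$ if only one slot changes. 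Hence the difference lies in $H$.

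Assembling: given $x, y \in \Z[R^*]$ with $\pi(x) = \pi(y)$, write $x = \sum n_a \langle a\rangle$ and split into positive and negative parts; $\pi(x) = \pi(y)$ means an isometry of inner product spaces $V_x \perp V_y^{-} \cong V_y^{+} \perp V_x^{-}$ after adding a common orthogonal-basis space, where $V_x^{\pm}$ etc.\ denote the orthogonal sums coming from the positive/negative coefficients. Applying the Chain Lemma argument of the previous paragraph to this isometry of spaces-with-orthogonal-bases shows $x - y \in H$. The one subtlety is making sure that when we add the auxiliary space $T$ and compare orthogonal bases, $T$ itself has an orthogonal basis so that the combined spaces genuinely have orthogonal bases to which Theorem \ref{thm:ChainLemma} applies — this is guaranteed by Proposition \ref{prop:GWgens}(\ref{prop:GWgens:item3}), which also supplies the initial reduction that every class in $GW(R)$ is represented by a space with an orthogonal basis.

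The main obstacle is the bookkeeping around group completion and stabilisation: one must carefully track that the relation $\pi(x)=\pi(y)$ in $GW(R)$ translates into an \emph{honest isometry} of inner product spaces (not just a stable one) between two explicitly diagonalised spaces of equal rank, so that the Chain Lemma is directly applicable slot-by-slot. Once that translation is in place, each elementary chain step contributes a generator of $H$ essentially by inspection, so the remaining work is routine. I would present the proof by first recording the easy inclusion, then the reduction to comparing two isometric diagonalised spaces of equal rank, then invoking Theorem \ref{thm:ChainLemma} and analysing a single chain step.
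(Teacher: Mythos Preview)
Your proposal is correct and follows essentially the same route as the paper: reduce an arbitrary element of $\ker(\pi)$ to a comparison of two orthogonal bases of a single inner product space by stabilising with an auxiliary space that has an orthogonal basis (Proposition~\ref{prop:GWgens}(\ref{prop:GWgens:item3})), then invoke the Chain Lemma (Theorem~\ref{thm:ChainLemma}) and observe that each step in the chain contributes a generator of $H$, so that a telescoping sum finishes the argument. The paper's proof is exactly this, stated slightly more concretely by writing a generic kernel element as $\sum\langle a_i\rangle - \sum\langle b_j\rangle$ from the outset.
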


\begin{proof}
By definition, an element $\sum_{i=1}^n \langle a_{i} \rangle - \sum_{j=1}^m \langle b_{j} \rangle$ of $\Z[R^*]$ with $a_{i},b_{j} \in R^*$ is in 
 $\ker(\pi) \subset \Z[R^*]$ if and only if there is an inner product space $K$ and an isometry \red{of inner product spaces}
\begin{equation}
\label{eqn:KoplusDiag}
\red{\langle a_{1},...,a_{n} \rangle \oplus K  \cong \langle b_{1},...,b_{m} \rangle \oplus K}.
\end{equation}
%\end{align*}
%\end{center}
In particular, $n=m$.
By Proposition \ref{prop:GWgens} (\ref{prop:GWgens:item2}), there exists an inner product space $W$ over $R$ such that $K \oplus W$ admits an orthogonal basis.
Replacing $K$ with $K\oplus W$, we can assume that $K$ in (\ref{eqn:KoplusDiag}) has an orthogonal basis, say $\{z_1 ,...,z_l\}$. 
The inner product space $\red{(V,\b)} \defeq \langle a_{1},...,a_{n} \rangle \oplus K  \cong \langle b_{1},...,b_{n} \rangle \oplus K$ has the following two orthogonal bases:
$$
A =\{x_{1},...,x_{n},z_{1},..,z_{l}\}, \text{with } \b(x_{i},x_{i}) = a_{i}, \text{ and } \b(z_{i},z_{i}) = c_{i},\ \text{and}$$
$$
B = \{y_{1},...,y_{n},z\red{'}_{1},..,z\red{'}_{l}\}, \text{with } \b(y_{i},y_{i}) = b_{i}, \text{ and } \b(z\red{'}_{i},z\red{'}_{i}) = c_{i}.\phantom{andm}
$$
By Theorem \ref{thm:ChainLemma}, we can choose a chain of orthogonal bases, $C_{0}, C_{1} , ... , C_{N-1} , C_{N}$ such that $C_i$ and $C_{i+1}$ differ in at most $2$ elements, $i=0,...,N-1$, and $C_0=A$, $C_N=B$.
Let $\langle c_{1}^{(i)},...,c_{n+l}^{(i)} \rangle$ be the diagonal form corresponding to $C_{i}$. As $C_{i}$ and $C_{i+1}$ differ in at most two vectors, 
\begin{center}
\red{$(\langle c_{1}^{(i)}\rangle +...+\langle c_{n+l}^{(i)}\rangle ) - (\langle c_{1}^{(i+1)}\rangle +...+\langle c_{n+l}^{(i+1)})\rangle \in \Z[R^*]$} 
\end{center}
is of the form 
\begin{center}
\red{$\langle a\rangle  - \langle b\rangle \in \Z[R^*]$} with $\langle a \rangle \cong  \langle b \rangle $ \\
or \\
\red{$\langle a\rangle  + \langle b\rangle - \langle a'\rangle - \langle b'\rangle \in \Z[R^*]$} with $\langle a,b \rangle \cong  \langle a',b' \rangle $.
\end{center}
\red{
In $\Z[R^*]$, we have 
$$\renewcommand\arraystretch{2}
\begin{array}{rcl}
\sum_{i=1}^{n}\langle a_{i}\rangle - \sum_{j=1}^{n}\langle b_{j}\rangle &=& (\sum_{i=1}^{n}\langle a_{i}\rangle + \sum_{i=1}^{l}\langle c_{i}\rangle ) - (\sum_{j=1}^{n}\langle b_{j}\rangle + \sum_{i=1}^{l}\langle c_{i}\rangle)\\
&=& \sum_{i=1}^{n+l}\langle c_{i}^{(0)}\rangle - \sum_{j=1}^{n+l}\langle c_{j}^{(N)}\rangle \\
&=& \sum_{k=0}^{N-1}(\sum_{i=0}^{n+l}\langle c_{i}^{(k)}\rangle - \sum_{i=0}^{n+l}\langle c_{i}^{(k+1)}\rangle),
\end{array}
$$
which is of the desired form}.
\end{proof}

\begin{lemma}
\label{lem:freps}
Let $R$ be a commutative ring.
Assume we have an isometry of inner product spaces $\langle a,b\rangle \cong \langle c,d\rangle$ over $R$ where $a,b,c,d\in R^*$ with $d=abc$ and $c = ax^2+by^2$, $x,y\in R$.
If \red{in $R$, we have} $f=as^2 +bt^2$, then \red{the following equation holds in $R$}
$$f=c \left(\frac{asx +bty}{c}\right)^2 + d\left(\frac{tx - sy}{c}\right)^2.$$
\end{lemma}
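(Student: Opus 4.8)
The plan is to verify the asserted identity by a direct but organized algebraic computation, exploiting the two given constraints $d = abc$ and $c = ax^2 + by^2$. First I would compute the right-hand side
$$c\left(\frac{asx+bty}{c}\right)^2 + d\left(\frac{tx-sy}{c}\right)^2 = \frac{c(asx+bty)^2 + d(tx-sy)^2}{c^2}.$$
Substituting $d = abc$ in the second term, this becomes $\frac{c(asx+bty)^2 + abc(tx-sy)^2}{c^2} = \frac{(asx+bty)^2 + ab(tx-sy)^2}{c}$. So it suffices to show that $(asx+bty)^2 + ab(tx-sy)^2 = cf = (ax^2+by^2)(as^2+bt^2)$.

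The core step is then to expand both sides of this last equation and check they agree. Expanding the left-hand side gives $a^2s^2x^2 + 2absxty + b^2t^2y^2 + ab(t^2x^2 - 2txsy + s^2y^2)$; the cross terms $2absxty$ cancel, leaving $a^2s^2x^2 + b^2t^2y^2 + abt^2x^2 + abs^2y^2$. Expanding the right-hand side $(ax^2+by^2)(as^2+bt^2) = a^2s^2x^2 + abt^2x^2 + abs^2y^2 + b^2t^2y^2$, which is visibly the same. This is the classical Brahmagupta--Fibonacci type identity $(u^2+Nv^2)(p^2+Nq^2) = (up+Nvq)^2 + N(uq-vp)^2$ applied with $N=ab$ after a suitable rescaling, so no subtlety arises here.

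The only genuine point requiring care is that the formula involves dividing by $c$ (twice, as $c^2$ in the denominator): one must confirm that the quantities $\frac{asx+bty}{c}$ and $\frac{tx-sy}{c}$ actually lie in $R$, not merely in some localization. Since $c\in R^*$ by hypothesis, $c$ is invertible in $R$ and this is automatic; the displayed fractions are honest elements of $R$ and the identity is an equality in $R$. Thus the main obstacle is essentially bookkeeping rather than conceptual: keeping the substitutions $d=abc$ and $c=ax^2+by^2$ straight while clearing denominators, after which the claim reduces to the elementary polynomial identity verified above. One then concludes by reversing the chain of equalities to recover the stated form of $f$.
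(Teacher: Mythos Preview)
Your proof is correct and follows the same approach as the paper, whose proof reads simply ``Direct verification.'' Your expansion is exactly the computation that underlies that one-line proof.
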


\begin{proof}
Direct verification.
\end{proof}

For a commutative local ring $R$, let $K^{MW}_0(R)$ be the quotient ring of $\Z[R^*]$ modulo the ideal generated by the relations (\ref{thm:presentation:SquareTriviality}), (\ref{thm:presentation:ScalarInv}) and (\ref{thm:presentation:SteinbergRln}) of Theorem \ref{thm:presentation} where $\mylangle a \myrangle = 1- \langle a \rangle$, and $\langle a \rangle \in \Z[R^*]$ is the element corresponding to $a\in R^*$.

\begin{lemma}
\label{lem:GWtoKMWrlns2}
Let $(R,\m,F)$ be a commutative local ring with residue field $F\neq \F_2$, and let $a,b,c,d \in R^*$ with $\langle a,b\rangle \cong \langle c,d \rangle$ as inner product spaces over $R$.
Then the following equality holds in $K_0^{MW}(R)$:
$$\langle a\rangle +\langle b\rangle =\langle c\rangle + \langle d\rangle.$$
\end{lemma}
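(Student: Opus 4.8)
The plan is to reduce the isometry relation $\langle a,b\rangle\cong\langle c,d\rangle$ to an explicit chain of elementary moves that are visibly compatible with the relations defining $K_0^{MW}(R)$. The starting point is the classical fact that over a commutative ring an isometry $\langle a,b\rangle\cong\langle c,d\rangle$ forces $cd\equiv ab$ modulo squares, and moreover (since $c$ is represented by $\langle a,b\rangle$) we may write $c=ax^2+by^2$ with $x,y\in R$ and then $d=abc\cdot(\text{unit square})$. First I would establish, purely from the defining relations of $K_0^{MW}(R)$, the basic identities that these relations encode: the multiplicativity/square-triviality relation $\langle a u^2\rangle=\langle a\rangle$ (immediate from (\ref{thm:presentation:SquareTriviality})), and the ``Witt/chain'' identity
\[
\langle p\rangle+\langle q\rangle=\langle p+q\rangle+\langle (p+q)pq\rangle
\quad\text{whenever }p,q,p+q\in R^*,
\]
which one derives from the Steinberg relation (\ref{thm:presentation:SteinbergRln}) together with (\ref{thm:presentation:SquareTriviality}) and (\ref{thm:presentation:ScalarInv}) in the standard way — this is the local-ring analogue of the field computation in \cite{MilnorHusemoller}, and I expect it can be quoted or reproved in a couple of lines from Lemma \ref{lem:GWtoKMWrlns} (referenced in the excerpt as handling the interrelations of the three relations). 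The point is that this last identity is exactly the one needed to ``rotate'' a rank-two diagonal form inside $K_0^{MW}(R)$.

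Next, with $c=ax^2+by^2$ in hand, I would distinguish the generic case where $ax^2, by^2$ and their sum $c$ are all units. Then the Witt identity applied with $p=ax^2$, $q=by^2$ gives, after absorbing the squares $x^2,y^2$ via (\ref{thm:presentation:SquareTriviality}),
\[
\langle a\rangle+\langle b\rangle=\langle c\rangle+\langle c a b\rangle=\langle c\rangle+\langle d\rangle,
\]
using $d=abc$ up to a unit square (and again (\ref{thm:presentation:SquareTriviality}) to discard that square). This handles the isometry once we know it is \emph{witnessed} by such a representation $c=ax^2+by^2$ with all intermediate terms units.

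The main obstacle is the degenerate case: one of $ax^2$, $by^2$ may fail to be a unit (i.e.\ $x$ or $y$ lies in $\m$), or their sum might coincide with one of the summands in a way that makes the naive application of the Witt identity illegal. To deal with this I would use the freedom in choosing the witnessing isometry: since $F\neq\F_2$, the residue field has ``enough'' units, and I would invoke Lemma \ref{lem:freps} to transport a given representation $f=as^2+bt^2$ of some auxiliary unit $f$ into a representation $f=c(\cdots)^2+d(\cdots)^2$, thereby chaining through an intermediate form $\langle a,b\rangle\cong\langle f,abf\rangle\cong\langle c,d\rangle$ for a well-chosen $f\in R^*$ for which both legs are of the ``generic'' type just treated. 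Concretely, I would pick $s,t\in R$ with $as^2,bt^2$ and $as^2+bt^2$ all units — possible because modulo $\m$ we are choosing two elements of $F^*$ with unit sum, which exists precisely because $F\neq\F_2$ — set $f=as^2+bt^2$, apply the generic case to $\langle a,b\rangle\cong\langle f,abf\rangle$, and then apply it again (or Lemma \ref{lem:freps} plus the generic case) to $\langle f,abf\rangle\cong\langle c,d\rangle$. Summing the two resulting equalities in $K_0^{MW}(R)$ and cancelling the common $\langle f\rangle+\langle abf\rangle$ yields $\langle a\rangle+\langle b\rangle=\langle c\rangle+\langle d\rangle$. The only remaining low-rank subtlety is when $R$ has very few units beyond what $\m$ contributes, e.g.\ $R=F=\F_3$; there one checks the finitely many isometry types of rank-two forms by hand, or notes that relations (\ref{thm:presentation:SquareTriviality}) and (\ref{thm:presentation:ScalarInv}) are vacuous and the statement is immediate. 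I expect the write-up to be short modulo the bookkeeping in selecting $s,t$ (equivalently $f$) and in verifying that Lemma \ref{lem:freps}'s formula keeps every denominator a unit.
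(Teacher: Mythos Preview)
Your overall strategy coincides with the paper's: treat the ``regularly represented'' case $c=ax^2+by^2$ with $x,y\in R^*$ directly via the Steinberg relation, and in the degenerate case (say $x\in R^*$, $y\in\m$) pass through an auxiliary form $\langle f,abf\rangle$ with $f=as^2+bt^2$ regularly represented, using Lemma~\ref{lem:freps} to check that $c$ is then regularly represented by the auxiliary form. That part is fine, and your verification that the two numerators in Lemma~\ref{lem:freps} are units (because $y\in\m$ kills the bad terms) is exactly what the paper does.

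The gap is in your existence claim for $s,t$. You assert that $s,t\in R^*$ with $as^2+bt^2\in R^*$ can be found ``because modulo $\m$ we are choosing two elements of $F^*$ with unit sum, which exists precisely because $F\neq\F_2$.'' But $\bar a\bar s^2$ and $\bar b\bar t^2$ are not arbitrary elements of $F^*$: they are pinned to the square classes of $\bar a$ and $\bar b$. When $F=\F_3$ every square in $F^*$ equals $1$, so $\bar a\bar s^2+\bar b\bar t^2=\bar a+\bar b$ regardless of $s,t$, and if $a+b\in\m$ no such $f$ exists. Your proposed remedy only covers $R=\F_3$ itself (where the relations are vacuous and a hand check suffices), not an arbitrary local ring with residue field $\F_3$, where there are infinitely many units and no finite check is available. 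The paper closes exactly this case with an extra idea: it flips signs, replacing $\langle a,b\rangle\cong\langle c,d\rangle$ by $\langle c,-b\rangle\cong\langle a,-d\rangle$, for which the obstruction disappears (since $\bar c-\bar b=2\bar a\neq 0$ in $\F_3$), applies the already-proved case, and then uses relation~(\ref{thm:presentation:ScalarInv}) to undo the sign change. You will need this (or an equivalent trick) to finish.
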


\begin{proof}
The isometry $\langle a,b\rangle \cong \langle c,d\rangle$ implies 
$c = ax^2+by^2 \in R$ for some $x,y\in R$ and $d=abc\in \red{R^*/(R^*)^2}$.
Since $\langle r^2d\rangle = \langle d\rangle \in K_0^{MW}(R)$, we can assume $d=abc\in R^*$.
If $x,y\in R^*$, we say that $c$ is {\em regularly represented} by $\langle a,b\rangle$.
In this case 
$$\renewcommand\arraystretch{1.5}
\begin{array}{rcl}
\langle a\rangle + \langle b\rangle &=& \langle ax^2\rangle + \langle by^2\rangle \\
&=& \langle c\rangle \left(\langle ac^{-1}x^2\rangle + \langle bc^{-1}y^2\rangle\right)\\
&=&  \langle c\rangle \left(\langle 1\rangle + \langle abc^{-2}x^2y^2\rangle \right)\\
&=& \langle c\rangle + \langle d\rangle
\end{array}
$$
in $K_0^{MW}(R)$ where we used the Steinberg relation for the third equality.

Assume now that one of $x$ or $y$ is in the maximal ideal $\m$ of $R$, then the other is a unit since $c$ is a unit. 
Without loss of generality, we can assume $x\in R^*$ and $y\in \m$.
We claim that if there is $z\in R^*$ such that $ax^2+bz^2\in R^*$, then
$\langle a\rangle +\langle b\rangle =\langle c\rangle + \langle d\rangle \in K^{MW}_0(R).$
Indeed, given $z\in R^*$ such that $\gamma=ax^2+bz^2\in R^*$ we set $\delta=ab\gamma$.
Then $\langle a,b\rangle \cong \langle \gamma,\delta\rangle$,  and $\gamma$ is regularly represented by $\langle a,b\rangle$.
In particular, $\langle \gamma \rangle + \langle \delta\rangle = \langle a \rangle + \langle b\rangle \in K_0^{MW}(R)$.
Since $c=ax^2+by^2$, Lemma \ref{lem:freps} yields
$$c=\gamma \left(\frac{ax^2 +byz}{\gamma}\right)^2 + \delta\left(\frac{xy - xz}{\gamma}\right)^2.$$
Note that $(ax^2 +byz)\gamma^{-1}$ and $(xy - xz)\gamma^{-1}$ are units in $R$ since $x,z,a,b,\gamma \in R^*$ and $y\in \m$.
In particular, $c$ is regularly represented by $\langle \gamma,\delta\rangle$ and thus $\langle c \rangle + \langle d\rangle = \langle \gamma \rangle + \langle \delta\rangle \in K_0^{MW}(R)$.
Hence,
$$\langle c \rangle + \langle d\rangle = \langle \gamma \rangle + \langle \delta\rangle = \langle a \rangle + \langle b\rangle\hspace{2ex}\in\hspace{2ex}K_0^{MW}(R).$$

If $F\neq \F_3$ (and $F\neq \F_2$, by assumption) then we can find an element $z\in R^*$ with $ax^2+bz^2\in R^*$  as in this case $F$ has at least \red{$2$ square} units, and we only need to make sure that its class $\bar{z}$ in $F=R/\m$ satisfies $\bar{z}^2 \neq\red{  -\bar{a}\bar{b}^{-1}\bar{x}^2\in F}$.
If  there is no $z\in R^*$ such that $ax^2+bz^2\in R^*$, then $F=\F_3$ and
$a+b, a-c\in \m$ as in this case square units in $R$ are $1$ modulo $\m$.
Then $\langle c,-b\rangle \cong \langle a, -d\rangle$ since $a = c(1/x)^2 -b(y/x)^2 $ and $d=abc$.
Note that there is $z\in R^*$ such that $\gamma = c (1/x)^2  -bz^2 \in R^*$.
For instance, $z=1/x \in R^*$ will do since $c-b=2 c -(a+b)+(a-c)\in R^*$.
As proved above, this implies $\langle c \rangle + \langle -b\rangle = \langle a\rangle + \langle -d\rangle$ in $K_0^{MW}(R)$.
Using relation (\ref{thm:presentation:ScalarInv}) of Theorem (\ref{thm:presentation}) which holds in $K^{MW}_0(R)$, we have
$$\langle a\rangle + \langle b \rangle = \langle a \rangle - \langle -b \rangle + h = \langle c \rangle - \langle -d \rangle + h = \langle c \rangle + \langle d \rangle\hspace{2ex}\in\hspace{2ex}K_0^{MW}(R).$$
\end{proof}    

\begin{corollary}
\label{cor:KMWGW}
Let $(R,\m,F)$ be a commutative local ring with residue field $F\neq \F_2$.
Then the surjection (\ref{eqn:ZRGWringhomo}) induces an isomorphism $$K_0^{MW}(R) \stackrel{\cong}{\longrightarrow} GW(R).$$
\end{corollary}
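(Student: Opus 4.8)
The plan is to prove that the kernel of the surjection $\pi$ of (\ref{eqn:ZRGWringhomo}) is precisely the ideal $J\subseteq\Z[R^*]$ generated by the relations (\ref{thm:presentation:SquareTriviality})--(\ref{thm:presentation:SteinbergRln}); then $\pi$ descends to the asserted isomorphism $GW(R)\cong\Z[R^*]/\ker\pi=\Z[R^*]/J=K_0^{MW}(R)$. That $\pi$ is surjective is Proposition \ref{prop:GWgens} (\ref{prop:GWgens:item3}).

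For the inclusion $J\subseteq\ker\pi$ one checks that the three relations hold in $GW(R)$: $\mylangle a^2\myrangle\mapsto 0$ since $\langle a^2\rangle\cong\langle 1\rangle$; the Steinberg relation maps to $0$ since $a\cdot 1^2+(1-a)\cdot 1^2=1$ gives an isometry $\langle a,1-a\rangle\cong\langle 1,a(1-a)\rangle$; and $\mylangle a\myrangle\cdot h\mapsto 0$ because $\langle a,-a\rangle$ and $\langle 1,-1\rangle$ are stably isometric. (These verifications, which take some care when $\operatorname{char}F=2$, are recorded in Lemma \ref{lem:GWtoKMWrlns}.) Thus $\pi$ induces a surjective ring homomorphism $\bar\pi\colon K_0^{MW}(R)\to GW(R)$.

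For the reverse inclusion $\ker\pi\subseteq J$, I would invoke Lemma \ref{lem:kerpiGens}: using the Chain Lemma (Theorem \ref{thm:ChainLemma}), it shows that $\ker\pi$ is generated as an abelian group by elements of two shapes, namely $\langle\alpha\rangle-\langle\beta\rangle$ with $\langle\alpha\rangle\cong\langle\beta\rangle$, and $\langle\alpha\rangle+\langle\beta\rangle-\langle\gamma\rangle-\langle\delta\rangle$ with $\langle\alpha,\beta\rangle\cong\langle\gamma,\delta\rangle$. It remains to check that each such generator vanishes in $K_0^{MW}(R)$. For the first type: over the local ring $R$ an isometry $\langle\alpha\rangle\cong\langle\beta\rangle$ forces $\beta=\alpha d^2$ with $d\in R^*$, and relation (\ref{thm:presentation:SquareTriviality}) gives $\langle\beta\rangle=\langle\alpha d^2\rangle=\langle\alpha\rangle$ in $K_0^{MW}(R)$, so the element lies in $J$. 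For the second type: this is exactly Lemma \ref{lem:GWtoKMWrlns2}. Hence $\ker\pi\subseteq J$, so $\ker\pi=J$ and $\bar\pi$ is an isomorphism.

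The main obstacle is not in this corollary, which is a short assembly of Proposition \ref{prop:GWgens}, Lemma \ref{lem:kerpiGens} and Lemma \ref{lem:GWtoKMWrlns2}, but in those inputs: the Chain Lemma, proved in Section \ref{sec:ChainLemma}, which is what lets Lemma \ref{lem:kerpiGens} reduce $\ker\pi$ to rank $\le 2$ generators, and the case distinctions inside Lemma \ref{lem:GWtoKMWrlns2} (values not regularly represented, residue field $\F_3$). I expect the corollary itself to require no further idea.
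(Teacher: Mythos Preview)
Your approach is essentially the same as the paper's: show $J\subseteq\ker\pi$ by verifying the three relations in $GW(R)$, then use Lemma~\ref{lem:kerpiGens} and Lemma~\ref{lem:GWtoKMWrlns2} for the reverse inclusion. One correction: the parenthetical citing Lemma~\ref{lem:GWtoKMWrlns} is misplaced---that lemma proves identities in $\tilde{K}_0^{MW}(R)$ (it justifies Remark~\ref{rmk:IrrelvRlns}) and has nothing to do with checking that the relations hold in $GW(R)$; in the paper these verifications are done directly in the proof of the corollary via explicit $2\times 2$ matrix identities (and Proposition~\ref{prop:GWgens}~(\ref{prop:GWgens:item2}) to identify $h$ with the hyperbolic plane).
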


\begin{proof}
Let $J \subset \mathbb{Z}[R^*]$ be the ideal generated by the relations  (\ref{thm:presentation:SquareTriviality}), (\ref{thm:presentation:ScalarInv}) and (\ref{thm:presentation:SteinbergRln}) of Theorem \ref{thm:presentation}, that is, $J$ is the kernel of the ring homomorphism $\Z[R^*]\to K^{MW}_0(R)$.
As before, let $\pi : \mathbb{Z}[R^*] \rightarrow GW(R)$, $\langle a \rangle \mapsto \langle a \rangle$ be the canonical ring homomorphism (\ref{eqn:ZRGWringhomo}).
It is well known that $J \subset \ker \pi$. 
Indeed, the first relation is the isometry $\langle u\rangle \cong \langle a^2u\rangle$ given by the multiplication with $a\in R^*$, the second relation follows from \red{the equation in $M_2(R)$}
$$\begin{pmatrix}
0 & 1 \\
u & 0
\end{pmatrix} 
\begin{pmatrix}
0 & 1 \\
1 & 0 
\end{pmatrix}
\begin{pmatrix}
0& u \\
1 & 0 
\end{pmatrix}
=
\begin{pmatrix}
0 & u \\
u& 0
\end{pmatrix},$$
that is, $\left\langle \left(\begin{smallmatrix}0&1\\ 1&0\end{smallmatrix}\right)\right\rangle \cong \langle u \rangle \cdot \left\langle \left( \begin{smallmatrix}0&1\\ 1&0\end{smallmatrix}\right)\right\rangle$, and the equality $\left\langle \left(\begin{smallmatrix}0&1\\ 1&0\end{smallmatrix}\right)\right\rangle = h \in GW(R)$ in view of Proposition \ref{prop:GWgens} (\ref{prop:GWgens:item2}) with $a=b=0$.
The last relation is a consequence of the equality \red{in $M_2(R)$}
$$\begin{pmatrix}
1 & -1 \\
1-a & a 
\end{pmatrix} 
\begin{pmatrix}
a & 0 \\
0 & 1-a 
\end{pmatrix}
\begin{pmatrix}
1 & 1-a \\
-1 & a 
\end{pmatrix}
=
\begin{pmatrix}
1 & 0 \\
0& a(1-a) 
\end{pmatrix}.$$
Lemma \ref{lem:kerpiGens} gives us additive generators of $\ker(\pi)$.
By definition of $K^{MW}_0(R)$ and Lemma \ref{lem:GWtoKMWrlns2}, these generators are in $J$, and so, $J=\ker(\pi)$.
\end{proof}

We finish the section with a proof of Remark \ref{rmk:IrrelvRlns}.
Let $\tilde{K}_0^{MW}(R)$ be the ring quotient of $\Z[R^*]$ modulo the Steinberg relation (\ref{thm:presentation:SteinbergRln}) of Theorem \ref{thm:presentation}.

\begin{lemma}
\label{lem:GWtoKMWrlns}
Let $(R,\m,F)$ be a commutative local ring with residue field $F\neq \F_2,\F_3$.
Then for all $a\in R^*$, the following holds in $\tilde{K}_0^{MW}(R)$:
\begin{enumerate}
\item 
\label{lem:GWtoKMWrlns:item1}
$\mylangle a \myrangle \mylangle -a \myrangle=0$,
\item 
\label{lem:GWtoKMWrlns:item2}
$\mylangle a^2\myrangle = \mylangle a \myrangle \cdot h$.
\end{enumerate}
\end{lemma}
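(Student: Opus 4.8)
The plan is to carry out all the arithmetic in the ring $\tilde K_0^{MW}(R)$, exploiting a handful of identities that already hold in $\Z[R^*]$ (hence in any quotient): for $x,y\in R^*$,
$$
\mylangle xy\myrangle=\mylangle x\myrangle+\mylangle y\myrangle-\mylangle x\myrangle\mylangle y\myrangle,\qquad
\mylangle x^{-1}\myrangle=-\langle x^{-1}\rangle\mylangle x\myrangle,\qquad
\mylangle -1\myrangle\cdot h=0,
$$
the last because $\langle-1\rangle^2=\langle1\rangle$; specialising the first identity to $y=x$ gives $\mylangle x^2\myrangle=\mylangle x\myrangle(1+\langle x\rangle)$, and to $x=-1$ gives $\mylangle -a\myrangle=\mylangle -1\myrangle+\langle-1\rangle\mylangle a\myrangle$.

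First I would observe that parts (\ref{lem:GWtoKMWrlns:item1}) and (\ref{lem:GWtoKMWrlns:item2}) are equivalent, so that it is enough to prove (\ref{lem:GWtoKMWrlns:item1}). Indeed, using $\mylangle a^2\myrangle=\mylangle a\myrangle(1+\langle a\rangle)$ and $h=1+\langle-1\rangle$, statement (\ref{lem:GWtoKMWrlns:item2}) reads $\mylangle a\myrangle(\langle a\rangle-\langle-1\rangle)=0$; and since $\langle a\rangle-\langle-1\rangle=-\langle-1\rangle\mylangle -a\myrangle$, this is exactly $\mylangle a\myrangle\mylangle -a\myrangle=0$, i.e. (\ref{lem:GWtoKMWrlns:item1}).

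Next I would prove (\ref{lem:GWtoKMWrlns:item1}) under the extra hypothesis $1-a\in R^*$, i.e. $\bar a\neq 1$. Then also $1-a^{-1}=-(1-a)a^{-1}\in R^*$, and $-a=(1-a)(1-a^{-1})^{-1}$ in $R^*$. Expanding $\mylangle -a\myrangle$ with the first identity, multiplying through by $\mylangle a\myrangle$, and cancelling the two terms containing the factor $\mylangle a\myrangle\mylangle 1-a\myrangle$ (which is $0$ by the Steinberg relation), one is left with $\mylangle a\myrangle\mylangle -a\myrangle=-\langle(1-a^{-1})^{-1}\rangle\,\mylangle a\myrangle\mylangle 1-a^{-1}\myrangle$ after using $\mylangle x^{-1}\myrangle=-\langle x^{-1}\rangle\mylangle x\myrangle$. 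Finally $\mylangle a^{-1}\myrangle\mylangle 1-a^{-1}\myrangle=0$ (Steinberg for $a^{-1}$) together with $\mylangle a^{-1}\myrangle=-\langle a^{-1}\rangle\mylangle a\myrangle$ yields $\mylangle a\myrangle\mylangle 1-a^{-1}\myrangle=0$, so $\mylangle a\myrangle\mylangle -a\myrangle=0$. This case needs no hypothesis on $|F|$.

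It then remains to handle $a$ with $\bar a=1$. If $\operatorname{char}F\neq 2$ this is immediate: $\overline{-a}=-1\neq1$, so the previous case applied to $-a$ gives $\mylangle -a\myrangle\mylangle a\myrangle=0$. If $\operatorname{char}F=2$ — so that $F\neq\F_2$ forces $|F|\ge4$ — this is the main obstacle, since now $\overline{-a}=1$ as well and the cheap reduction is unavailable. Here $a$ lies in the subgroup $1+\m$, and the naive substitutions (writing $a=(ac)c^{-1}$ or $ac^2$ with $\bar c\neq 1$) do not close up on their own, precisely because in $\tilde K_0^{MW}(R)$ only the Steinberg relation, not $\mylangle c^2\myrangle=0$, is available. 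The route I would take is: choose $c\in R^*$ with $\bar c\notin\{0,1\}$; apply the already‑proved case to $c$, $ac$ and $c^{-1}$ (all with residue $\neq1$) to rewrite $\mylangle(ac)^2\myrangle$, $\mylangle c^2\myrangle$ and $h$; combine these via the first identity to reduce the goal to the single vanishing $\mylangle a\myrangle\mylangle c^2\myrangle=0$ for a suitable such $c$; and establish that last relation by chaining the Steinberg relations attached to $ac$, $1-ac$, $c$, $1-c$ and their products — this chaining, where the constraint $F\neq\F_2$ (and, for the bookkeeping with residues, $F\neq\F_3$) is actually used, is the technical heart of the argument. Granting (\ref{lem:GWtoKMWrlns:item1}) in all cases, part (\ref{lem:GWtoKMWrlns:item2}) follows from the equivalence noted at the start.
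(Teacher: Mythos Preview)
Your equivalence of (\ref{lem:GWtoKMWrlns:item1}) and (\ref{lem:GWtoKMWrlns:item2}) and your treatment of the case $\bar a\neq 1$ are correct and agree with the paper. Your observation that for $\operatorname{char}F\neq 2$ one can simply swap $a$ and $-a$ (since then $\overline{-a}=-1\neq 1$) is a pleasant shortcut the paper does not take; the paper's argument is uniform in the characteristic.

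The gap is in the remaining case $\operatorname{char}F=2$, $\bar a=1$. Your reduction to the vanishing of $\mylangle a\myrangle\mylangle c^2\myrangle$ is correct and is in fact equivalent to the paper's intermediate formula $\mylangle a\myrangle\mylangle -a\myrangle=-h\langle a\rangle\mylangle a\myrangle\mylangle c\myrangle$ (use $\mylangle c^2\myrangle=\mylangle c\myrangle h$, valid since $\bar c\neq 1$). But you then propose to kill this term ``for a suitable such $c$'' by ``chaining the Steinberg relations attached to $ac$, $1-ac$, $c$, $1-c$'', and this is where the argument is missing: no such direct chain is given, and it is not clear one exists. The key idea you are missing is that the reduction holds not for one suitable $c$ but for \emph{every} $c$ with $\bar c\neq 1$, and the paper exploits precisely this. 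Writing $X:=\mylangle a\myrangle\mylangle -a\myrangle$, one has $X=-h\langle a\rangle\mylangle a\myrangle\mylangle b\myrangle$ for every $b$ with $\bar b\neq 1$. Since $|F|\geq 4$ one can choose $b_1,b_2$ with $\bar b_1,\bar b_2,\bar b_1\bar b_2\neq 1$; applying the identity to $b_1b_2$ and expanding $\mylangle b_1b_2\myrangle=\mylangle b_1\myrangle+\langle b_1\rangle\mylangle b_2\myrangle$ gives $X=X+\langle b_1\rangle X$, whence $\langle b_1\rangle X=0$ and $X=0$. This comparison of the same relation at two different auxiliary units is the actual ``technical heart'', and it is where the hypothesis $F\neq\F_2,\F_3$ enters.
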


\begin{proof}
Part (\ref{lem:GWtoKMWrlns:item1}) was implicitly proved in \cite[Lemma 4.4]{myEuler}.
The analogous arguments for Milnor $K$-theory are due to \cite{Milnor:KthQuadF}.
We give the relevant details here.
First assume $\bar{a}\neq 1$ where $\bar{a}$ means reduction modulo the maximal ideal $\m\subset R$.
Then $1-a, 1-a^{-1}\in R^*$.
Therefore, in \red{$\tilde{K}_0^{MW}(R)$}, we have
$$\renewcommand\arraystretch{1.5}
\begin{array}{rcl}
\mylangle a\myrangle \mylangle-a\myrangle &= & \mylangle a\myrangle \left(\mylangle 1-a\myrangle-\langle -a\rangle \mylangle 1-a^{-1}\myrangle\right)\\
& = &
-\langle -a\rangle \mylangle a\myrangle \mylangle1-a^{-1}\myrangle = \langle -a\rangle \langle a\rangle \mylangle a^{-1}\myrangle \mylangle 1-a^{-1}\myrangle\\
& =&0.
\end{array}$$
If $\bar{a}=1$, choose $b\in R^*$ with $\bar{b}\neq 1$.
This is possible since $F \neq \F_2$.
Then $\bar{a}\bar{b}\neq 1$.
Therefore, in \red{$\tilde{K}_0^{MW}(R)$}, we have
$$
\renewcommand\arraystretch{1.5}
\begin{array}{rcl}
0 &= &\mylangle ab \myrangle \mylangle -ab \myrangle = %\left(\mylangle a \myrangle +\langle a\rangle\mylangle b \myrangle \right)\mylangle -ab \myrangle  = \mylangle a \myrangle \mylangle -ab \myrangle +\langle a\rangle \mylangle b \myrangle \mylangle -ab \myrangle \\
%&=&
 \mylangle a \myrangle \left(\mylangle -a \myrangle +\langle -a\rangle \mylangle b \myrangle \right)+\langle a\rangle \mylangle b \myrangle \left(\mylangle a \myrangle +\langle a\rangle\mylangle -b \myrangle \right)\\
&=&\mylangle a \myrangle \mylangle -a \myrangle + h\langle a \rangle \mylangle a \myrangle \mylangle b \myrangle.  %\langle -a\rangle\mylangle a \myrangle \mylangle b \myrangle  +\langle a\rangle \mylangle b \myrangle \mylangle a \myrangle.
\end{array}$$
Hence, for all $\bar{b}\neq 1$ we have
$\mylangle a \myrangle \mylangle -a \myrangle =-h\langle a \rangle \mylangle a \myrangle \mylangle b \myrangle \ \red{\in \tilde{K}_0^{MW}(R)}$. 
%\langle -a\rangle \mylangle a \myrangle \mylangle b \myrangle -\langle a \rangle \mylangle b \myrangle \mylangle a \myrangle $.
Now, choose $b_1,b_2\in A^*$ such that $\bar{b}_1,\bar{b}_2,\bar{b}_1\bar{b}_2\neq 1$. 
This is possible since $|F|\geq 4$.
Then \red{in $\tilde{K}_0^{MW}(R)$ we have}
$$\renewcommand\arraystretch{1.5}
\begin{array}{rcl}
\mylangle a \myrangle \mylangle -a \myrangle& = &-h\langle a \rangle \mylangle a \myrangle \mylangle b_1b_2 \myrangle\\
&=&-h\langle a \rangle \mylangle a \myrangle ( \mylangle b_1\myrangle + \langle b_1\rangle \mylangle b_2\myrangle)\\
&=& \mylangle a \myrangle \mylangle -a \myrangle +\langle b_1 \rangle \mylangle a \myrangle \mylangle -a \myrangle .
\end{array}$$
Hence, $\langle b_1\rangle \mylangle a \myrangle \mylangle -a \myrangle =0\ \red{\in \tilde{K}_0^{MW}(R)}$.
Multiplying with $\langle b_1^{-1}\rangle $ yields the result.

In $\Z[R^*]$ we have
$\mylangle a \myrangle \mylangle -a \myrangle \cdot \langle -1 \rangle + \mylangle a^2 \myrangle  = \mylangle a \myrangle\cdot h$ which implies part (\ref{lem:GWtoKMWrlns:item2}).
\end{proof}

\section{An example of $GW(R)\ncong K_0^{MW}(R)$}

\red{For any commutative local ring $R$, the three defining relations for $K^{MW}_0(R)$ hold in $GW(R)$; see the proof of Corollary \ref{cor:KMWGW}.
In particular, the map \red{(\ref{eqn:ZRGWringhomo}) factors through the quotient $K^{MW}_0(R)$ of $\Z[R^*]$ and induces the ring homomorphism $K^{MW}_0(R) \to GW(R)$ sending the generator $\langle a \rangle$ of $K^{MW}_0(R)$ to the Grothendieck-Witt class of the inner product space $\langle a \rangle$ for $a\in R^*$.
This ring homomorphism} is surjective for any local ring $R$, by Proposition \ref{prop:GWgens}.
Thus, we obtain natural surjective ring homomorphisms
\begin{equation}
\label{eqn1:prop:tildeInotI}
\Z[R^*] \twoheadrightarrow \Z[\red{R^*/(R^*)^2}] \twoheadrightarrow K^{MW}_0(R)\twoheadrightarrow GW(R) \stackrel{\operatorname{rk}}{\twoheadrightarrow} \Z
\end{equation}
where the last map sends an inner product space $(V,\b)$ to the rank $n=\rk(V)$ of the free $R$-module $V\cong R^n$.}

\begin{proposition}
\label{prop:tildeInotI}
For $R = \mathbb{F}_2[x]/(x^4)$, the natural surjection $K_0^{MW}(R) \to GW(R)$ \red{in (\ref{eqn1:prop:tildeInotI})} has kernel $\Z/2$.
In fact, we have isomorphisms of abelian groups 
$$GW(R)\cong \Z \oplus (\Z/2)^2\hspace{2ex}\text{and}\hspace{2ex}K_0^{MW}(R) \cong \Z \oplus (\Z/2)^3.$$
\end{proposition}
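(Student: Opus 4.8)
The plan is to compute $K_0^{MW}(R)$ by elementary algebra in the group ring, to determine the torsion subgroup of $GW(R)$ by combining the determinant with one explicit isometry, and then to compare the two.

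\emph{Step 1: the ring and $K_0^{MW}(R)$.} Since $R=\F_2[x]/(x^4)$ is local with maximal ideal $\m=(x)$ and residue field $\F_2$, its unit group $R^*=1+\m$ has order $8$; as $\car R=2$ we have $(1+m)^2=1+m^2$, so $(R^*)^2=\{1,1+x^2\}$ and $G:=R^*/(R^*)^2\cong(\Z/2)^2$ (it has order $4$ and every class squares to $1$), with $\F_2$-basis the classes $g_1,g_2$ of $1+x,1+x^3$. Now $K_0^{MW}(R)=\Z[R^*]/J$ with $J$ generated by relations (\ref{thm:presentation:SquareTriviality})--(\ref{thm:presentation:SteinbergRln}). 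Because the residue field is $\F_2$, no $a\in R^*$ has $1-a\in R^*$ (indeed $1-a\in\m$), so the Steinberg relation (\ref{thm:presentation:SteinbergRln}) is vacuous; relation (\ref{thm:presentation:SquareTriviality}) makes $\Z[R^*]\to K_0^{MW}(R)$ factor through $\Z[G]$; and since $h_\Z=\langle1\rangle_\Z+\langle-1\rangle_\Z=2$ in $\Z[R^*]$ (because $-1=1$ in $R$), relation (\ref{thm:presentation:ScalarInv}) becomes $2\mylangle a\myrangle=0$, which kills $2I_G$, where $I_G\subset\Z[G]$ is the augmentation ideal. Hence $K_0^{MW}(R)=\Z[G]/2I_G\cong\Z\oplus I_G/2I_G\cong\Z\oplus(\Z/2)^3$, since $I_G$ is free abelian of rank $|G|-1=3$.

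\emph{Step 2: the shape of $GW(R)$ and a lower bound.} The rank homomorphism splits, so $GW(R)=\Z\langle1\rangle\oplus I(R)$ with $I(R)=\ker(\rk)$, and by Proposition \ref{prop:GWgens} the group $GW(R)$ is generated by the $\langle u\rangle$, so $I(R)$ is generated by the $\langle u\rangle-\langle1\rangle$, $u\in R^*$. The three relations defining $K_0^{MW}(R)$ hold in $GW(R)$ (proof of Corollary \ref{cor:KMWGW}), so $\Z[R^*]\to GW(R)$ factors through $K_0^{MW}(R)$ compatibly with the rank splittings; in particular $2\langle u\rangle=\langle u\rangle\,h=2\langle1\rangle$, so $I(R)$ is an $\F_2$-vector space and a quotient of $I_G/2I_G\cong(\Z/2)^3$. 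For a lower bound, the determinant of a Gram matrix, read modulo $(R^*)^2$, is multiplicative for $\perp$ and hence defines a homomorphism $\det\colon GW(R)\to R^*/(R^*)^2$; since $\det(\langle u\rangle-\langle1\rangle)$ is the class of $u$, it maps $I(R)$ onto $R^*/(R^*)^2\cong(\Z/2)^2$, so $\dim_{\F_2}I(R)\ge2$.

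\emph{Step 3: the new relation (the main obstacle).} One must exhibit an isometry of diagonal rank-$2$ forms over $R$ that is not visible in $K_0^{MW}(R)$ --- this is possible precisely because Lemma \ref{lem:GWtoKMWrlns2} fails for residue field $\F_2$. The vector $v=(1+x,x)\in R^2$ inside $\langle1,1+x\rangle$ is unimodular (its first coordinate is a unit) with $\q(v)=(1+x)^2+(1+x)x^2=1+x^3\in R^*$; splitting off the non-degenerate rank-one summand $Rv$ and identifying the orthogonal complement by its determinant gives an isometry $\langle1,1+x\rangle\cong\langle1+x^3\rangle\perp\langle(1+x)(1+x^3)\rangle$. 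In $GW(R)$ this reads $\langle1\rangle+\langle1+x\rangle=\langle1+x^3\rangle+\langle(1+x)(1+x^3)\rangle$, i.e., in $I(R)$, the relation $\mylangle g_1\myrangle=\mylangle g_2\myrangle+\mylangle g_1g_2\myrangle$ (using that $\mylangle\cdot\myrangle$ depends only on the class in $G$), equivalently $\mylangle g_1\myrangle\mylangle g_2\myrangle=0$. But $1-g_1,1-g_2,1-g_1g_2$ form a $\Z$-basis of $I_G$, hence an $\F_2$-basis of $I_G/2I_G$, so this relation does \emph{not} hold in $K_0^{MW}(R)$. Therefore $K_0^{MW}(R)\to GW(R)$ is not injective, so $\dim_{\F_2}I(R)\le2$; together with Step 2 this gives $I(R)\cong(\Z/2)^2$. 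Consequently $GW(R)\cong\Z\oplus(\Z/2)^2$ and $K_0^{MW}(R)\cong\Z\oplus(\Z/2)^3$, and the kernel of the surjection $K_0^{MW}(R)\to GW(R)$ is $\Z/2$. The only non-routine step is Step 3: one has to guess the isometry, verify the scalar identity $(1+x)^2+(1+x)x^2=1+x^3$, and then check that the resulting relation is genuinely new --- the computations in Steps 1 and 2 are standard manipulations with augmentation ideals, the determinant, and the already-established fact that the three $K_0^{MW}$-relations hold in $GW(R)$.
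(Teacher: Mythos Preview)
Your proof is correct and follows essentially the same strategy as the paper: compute $K_0^{MW}(R)$ by noting the Steinberg relation is vacuous and $h=2$, bound $I(R)$ below via the determinant (the paper phrases this as $I/I^2\cong R^*/(R^*)^2$ after first showing $I^2=0$), and bound it above by exhibiting one explicit rank-$2$ isometry. Your isometry $\langle 1,1+x\rangle\cong\langle 1+x^3,(1+x)(1+x^3)\rangle$, obtained by splitting off the anisotropic vector $(1+x,x)$, encodes the same relation in $GW(R)$ as the paper's matrix identity (the paper's units $1+x+x^2$ and $1+x^2+x^3$ lie in the square classes of your $g_1g_2$ and $g_2$), so the two arguments are equivalent up to the choice of representatives.
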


\begin{proof}
\red{Let $I_{\Z}\subset \Z[\red{R^*/(R^*)^2}]$, $I_{MW}\subset K^{MW}_0(R)$ and $I \subset GW(R)$ be the respective augmentation ideals, that is, the kernel of the surjective ring homomorphisms (\ref{eqn1:prop:tildeInotI}) from $\Z[\red{R^*/(R^*)^2}]$, $K^{MW}_0(R)$, $GW(R)$ to $\Z$.
The maps (\ref{eqn1:prop:tildeInotI}) induce surjections on augmentation ideals $I_{\Z} \twoheadrightarrow I_{MW} \twoheadrightarrow I$.}
The first part of the proposition is the statement that the surjection $\red{I_{MW}} \twoheadrightarrow I$ has kernel $\Z/2$.

For \red{the local ring} $R = \mathbb{F}_2[x]/(x^4)$, the group \red{of units} $R^*$ has \red{order} $8$ \red{and elements $1+ax+bx^2+cx^3$, where $a,b,c\in \F_2$}.
The group homomorphism $R^* \to R^*: a \mapsto a^2$ has image 
$\{(1+ax+bx^2+cx^3)^2|\ a,b,c\in \F_2\} = \{1,1+x^2\}$.
In particular, the cokernel $\red{R^*/(R^*)^2}$ is a \red{$2$-torsion abelian} group of order $4$. 
Hence, the group $\red{R^*/(R^*)^2}$ is the Klein $4$-group $K_4\red{\cong (\Z/2)^2}$.
A set of coset representatives for $\red{R^*/(R^*)^2}$ is given by the elements
$1$, $1+x$, $1+x+x^2$, $1+x^2+x^3 \in R^*$ since
$(1+x)(1+x^2+x^3)=1+x +x^2 +2x^3+x^4 = 1+x+x^2$ is not a square. 
From the matrix equation \red{in $M_2(R)$}
$$
\begin{pmatrix}
x & 1 \\
1 & x+x^2+x^3 
\end{pmatrix}
\begin{pmatrix}
1 & 0 \\
0 & 1+x 
\end{pmatrix}
\begin{pmatrix}
x & 1 \\
1 & x+x^2+x^3 
\end{pmatrix}=
\begin{pmatrix}
1+x+x^2 & 0 \\
0& 1+x^2+x^3 
\end{pmatrix}$$
we see that 
\begin{equation}
\label{eqn2:prop:tildeInotI}
\langle 1 \rangle + \langle 1+x \rangle = \langle 1+x+x^2 \rangle + \langle 1+x^2+x^3 \rangle \in GW(R).\end{equation}
We have $2I=0$ as $h=\langle 1\rangle + \langle -1\rangle \red{=\langle 1\rangle + \langle 1 \rangle} = 2$, thus $0 = \mylangle u \myrangle h = 2 \mylangle u \myrangle \in I$ for all $u\in R^*$, and $I$ is additively generated by $\mylangle u\myrangle$, $u\in R^*$.
\red{In view of (\ref{eqn2:prop:tildeInotI}) and $2I=0$}, we obtain \red{the equality in $GW(R)$}
\begin{equation}
\label{eqn3:prop:tildeInotI}
0 = \mylangle 1+x\myrangle + \mylangle 1+x+x^2 \myrangle + \mylangle 1+x^2+x^3 \myrangle = \sum_{\red{w \in R^*/(R^*)^2}}\mylangle \red{w} \myrangle
\end{equation}
from which we see that $I^2=0$.
Indeed, for $u \in \red{R^*/(R^*)^2}$ we have $\mylangle u \myrangle ^2 = 2\mylangle u \myrangle =0\ \red{\in GW(R)}$, and for $v\neq u \in \red{R^*/(R^*)^2}$, $u,v\neq 1 \in \red{R^*/(R^*)^2}$, we have \red{from (\ref{eqn3:prop:tildeInotI})}
$$\mylangle u \myrangle\mylangle v \myrangle = \mylangle u \myrangle + \mylangle v \myrangle + \mylangle uv \myrangle = \sum_{w \in \red{R^*/(R^*)^2}}\mylangle w \myrangle = 0 \in GW(R).$$
Recall the isomorphism $\red{R^*/(R^*)^2}\cong I/I^2: a \mapsto \mylangle a \myrangle$ with inverse the map that sends an inner product space $(V,\b)$ to the determinant of the Gram matrix of $\b$.
In our case, this yields $I=I/I^2\ \red{\cong R^*/(R^*)^2} \cong (\Z/2)^2$.

To compute \red{$I_{MW}$} for $R=\mathbb{F}_2[x]/(x^4)$, we note that if $a\in R$ is a unit then $1-a$ is not a unit and the Steinberg relation is vacuous. 
Moreover, $\mylangle u \myrangle h = 2 \mylangle u \myrangle\in \Z[R^*]$ as $h=\red{\langle 1 \rangle + \langle -1\rangle = \langle 1 \rangle + \langle 1 \rangle =}2 \in \red{\Z[R^*]}$, and thus, 
$K_0^{MW}(R)$ is the quotient of $\Z[\red{R^*/(R^*)^2}]$ by the relation $2\mylangle u \myrangle =0$ for $u \in \red{R^*/(R^*)^2}$.
\red{Since $I_{\Z}$ is additively generated by the elements $\mylangle u\myrangle$ for $u\in R^*/(R^*)^2$, we therefore have $K_0^{MW}(R) = \Z[\red{R^*/(R^*)^2}]/2I_{\Z}$ and $I_{MW} = I_{\Z}/2I_{\Z}$.
Now $I_{\Z}/2I_{\Z}= (\Z/2)^3$ since $I_{\Z}$} has $\Z$ basis the elements $\mylangle u \myrangle$, $1 \neq u \in \red{R^*/(R^*)^2\cong K_4}$.
Hence, the surjection $\red{I_{MW}} \twoheadrightarrow I$, which is $(\Z/2)^3 \twoheadrightarrow (\Z/2)^2$, has kernel $\Z/2$.

As abelian groups, we have $GW(R)\cong \Z \oplus I$ and $K_0^{MW}(R) \cong \Z \oplus \red{I_{MW}}$.
In particular, the computations above  show that $GW(R)\cong \Z \oplus (\Z/2)^2$ and $K_0^{MW}(R) \cong \Z \oplus (\Z/2)^3$.
\end{proof}

\bibliographystyle{plain}
%\bibliography{GWpresentation}

\end{document}